\newtheorem{example}{Example}[section]
\newtheorem{rem}{Remark}[section]
\newtheorem{prop}{Proposition}[section]
\newcommand{\Z}{\mathbb{Z}}
\newcommand{\R}{\mathbb{R}}
\newcommand{\cA}{\mathcal{A}}
\newcommand{\cE}{\mathcal{E}}
\newcommand{\be}{\begin{eqnarray}}
\newcommand{\ee}{\end{eqnarray}}
\newcommand{\beno}{\begin{eqnarray*}}
\newcommand{\eeno}{\end{eqnarray*}}
\newcommand{\barr}[1]{\begin{array}{#1}}
\newcommand{\earr}{\end{array}}
\newcommand{\eps}{\epsilon}
\newcommand{\dt}{\tau}
\newcommand{\dx}{{\Delta x}}
\newcommand{\dy}{{\Delta y}}
\newcommand{\ma}{\alpha}
\newcommand{\disp}{\displaystyle}
\newcommand{\converge}{\rightarrow}
\newcommand{\ud}{\frac{1}{2}}
\definecolor{darkgreen}{rgb}{0,0.5,0}
\definecolor{purple}{rgb}{1,0,1}
\newcommand{\kibitz}[2]{\ifnum\Comments=1\textcolor{#1}{#2}\fi}
\newcommand{\figs}{.}
\newcommand{\figsnew}{.}
\title
{An efficient filtered scheme for some first order Hamilton-Jacobi-Bellman equations%
\thanks{The authors wish to acknowledge the support obtained by the following grants:
AFOSR Grant no.\ FA9550-10-1-0029, 
and by the EU under the 7th Framework Programme Marie Curie Initial Training Network
``FP7-PEOPLE-2010-ITN'', SADCO project, GA number 264735-SADCO}
}
\date{\today}
\author
{
Olivier Bokanowski,%
\footnote{Laboratoire Jacques-Louis Lions, 
Universit{\'e} Paris-Diderot (Paris 7)
UFR de Math{\'e}matiques - B{\^a}t. Sophie Germain
5 rue Thomas Mann
75205 Paris Cedex 13, e-mail:
\texttt{boka@math.jussieu.fr}}
\
Maurizio Falcone,%
\footnote{Dip. di Matematica La Sapienza Universit{\'a} di Roma 
 P. Aldo Moro, 5   00185 Roma - Italy, e-mail:   
\texttt{falcone@mat.uniroma1.it}}
\
Smita Sahu%
\footnote{Dip. di Matematica La Sapienza Universit{\'a} di Roma 
 P. Aldo Moro, 5   00185 Roma - Italy, e-mail: 
\texttt{sahu@mat.uniroma1.it}} 
}
\begin{document}
\maketitle

\begin{abstract}
We introduce a new class of ``filtered" schemes for some first order non-linear Hamilton-Jacobi-Bellman equations. 
The work follows recent ideas of Froese and Oberman (SIAM J. Numer. Anal., Vol 51, pp.423-444, 2013).
The proposed schemes are not monotone but still satisfy some $\epsilon$-monotone property. Convergence results and 
precise error estimates are given, of the order of $\sqrt{\dx}$ where $\dx$ is the mesh size.
The framework allows to construct finite difference discretizations that are easy to implement, high--order in the domains where the solution is smooth,
and provably convergent, together with error estimates.
Numerical tests on several examples are given to validate the approach, also showing how the filtered technique can be applied to stabilize 
an otherwise unstable high--order scheme.
\end{abstract}

\begin{keywords} Hamilton-Jacobi equation, high-order schemes, $\eps$-monotone scheme, viscosity
solutions, error estimates
\end{keywords}

\section{Introduction}
In this work, our aim is to develop high--order and convergent schemes for first order Hamilton-Jacobi (HJ) equations
of the following form
\be\label{eq:hj1}
  & & \partial_t v+H(x,\nabla v)=0, \quad  (t,x)\in[0,T]\times \R^d\\
  & & v(0,x)=v_0(x),\quad x\in\R^d.
\ee

\medskip\noindent
Basic assumptions on the Hamiltonian $H$ and the initial data $v_0$ will be introduced in the next section. 
It is well known that, in the one dimensional case, there is  a strong link between Hamilton-Jacobi equations and scalar conservation laws. Namely, the viscosity solution of the evolutive HJ equation is the primitive of the  entropy solution of the corresponding hyperbolic conservation law with the same hamiltonian. 
There are several schemes developed for hyperbolic conservation law
(see references~\cite{H83}~\cite{H84},~\cite{CL84},~\cite{GS98}). 
Most of the numerical ideas to solve hyperbolic conservation law can be extended to HJ equations.  
Well known high--order essentially non-oscillatory (ENO) scheme have been introduced 
by A. Harten et al.\ in~\cite{HEOC87} 
for hyperbolic conservation laws, and then extended to HJ equation by Osher and Shu~\cite{OS91}. 
ENO schemes have shown to have high--order accuracy although a precise convergence result is still missing and, for this property,
they  have been quite successful in many applications. 
Despite the fact that  there is no convergence proof of ENO schemes  towards the viscosity solution of \eqref{eq:hj1} in the general case,
convergence results may hold for related schemes, e.g. MUSCL schemes, as it has been proved by Lions and Souganidis in ~\cite{lio-sou-95}.
Convergence results of some non monotone scheme have also been shown in particular cases~\cite{bok-meg-zid-2010}.
Another interesting result has been proved by 
Fjordholm et al.~\cite{FMT12}, they have shown that ENO interpolation is stable but the stability result 
is not sufficient to conclude total variation boundedness (TVB) of the ENO reconstruction procedure. 
In~\cite{F13}, a conjecture related to weak total variation property for ENO schemes is given.
Finally, let us also mention~\cite{CFR05}, where weighted essentially non-oscillatory (WENO) schemes have been applied to HJ equations;
the convergence proof of the scheme relies also on the work of Ferretti~\cite{fer-02} where higher than first order
schemes are proposed in a semi-Lagriangian setting, yet with restrictive conditions on the mesh steps.

In this paper we give a very simple way to construct high--order schemes in a convergent framework.
It is known (by Godunov's theorem) that a monotone scheme can be at most first order. 
Therefore it is needed to look for non-monotone schemes. The difficulty is then to combine non-monotonicity of the scheme
and convergence towards the viscosity solution of \eqref{eq:hj1}, and also to obtain error estimates. 
In our approach we will adapt a general idea of Froese and Oberman~\cite{FO13},
that was presented for stationnary second order Hamilton-Jacobi equations and based on the use of a ``filter" function.
Here we focus 
mainly on the case of evolutive first order Hamilton-Jacobi equation \eqref{eq:hj1}, and 
an adaptation to the steady case will be also considered.
We will design a slightly different filter function for which the filtered scheme is still 
an ``$\epsilon$-monotone" scheme (see Eq.\ref{eq:eps-monotone}), but that improves the numerical results.
Let us mention also the work \cite{bok-fal-fer-gru-kal-zid} for steady equations where some
$\eps$-monotone semi-Lagrangian schemes are studied.

The paper is organized as follows. 
In Section 2, we present the schemes and give main convergence results.
Section 3 is devoted to the numerical tests on several academic examples to illustrate our approach in one and 
two-dimensional cases. A test on nonlinear steady equations , as well an evolutive ``obstacle" HJ equation in the form of
$\min(u_t + H(x,u_x), u-g(x))=0$ for a given function $g$ are also included in this section.
Finally, Section 4 contains our concluding remarks.

\section{Definitions and main results}

\subsection{Setting of the problem}
Let us denote by  $|\cdot|$ the Euclidean norm on $\R^d$ ($d\geq 1$).
The following  classical assumptions will be considered in the sequel of this paper: \\
{\bf (A1)} 
$v_{0}$ is Lipschitz continuous function i.e. there exist $L_0 >0$ such that for every $x~,y\in \R^d$,
\begin{equation}\label{eq:L0}
  |v_0(x)-v_0(y)|\leq L_0 |x-y|.
\end{equation}
\medskip\noindent
{\bf (A2)} $H:\R^d\times\R^d\rightarrow \R^d$ 
satisfies, for some constant $C\geq 0$, for all $p,q,x,y\in \R^d$:
\be
    |H(y,p) - H(x,p)|\leq C (1+|p|) |y-x|,
\ee
and 
\be
    |H(x,q) - H(x,p)|\leq C (1+|x|) |q-p|.
\ee

Under assumptions (A1) and (A2) there exists a unique viscosity solution for \eqref{eq:hj1}
(see Ishii \cite{Ish-84}).
Furthermore $v$ is locally Lipschitz continuous on $[0,T]\times \R^d$.

For clarity of presentation we focus on the one-dimensional case and consider the
following simplified problem:
\be\label{eq:hj}
&&v_t + H(x, v_x)=0, \quad x\in \R,\ t\in [0,T],\\
&&v(0,x)=v_0(x),\quad x\in \R.
\ee

\subsection{Construction of the filtered scheme}
Let $\dt = \Delta t >0$ be a time step (in the form of $\dt=\frac{T}{N}$ for some $N\geq 1$), and $\dx>0$ be a space step. A uniform mesh in time  is defined by
$t_n:=n\dt$, $n\in[0,\dots,N]$, and in space by the nodes $x_j:=j\dx$, $j\in \Z$.

The construction of a filtered scheme needs three ingredients: 
\begin{itemize}
\item
  a monotone scheme, denoted $S^M$
\item 
  a high--order scheme, denoted $S^A$
\item 
  a bounded ``filter" function, $F: \R\converge \R$.
\end{itemize}
The high-order scheme need not be convergent nor stable;
the letter $A$ stands for ``arbitrary order", following \cite{FO13}.
For a start, $S^M$ will be based on a finite difference scheme. Later on we will also propose 
a definition of $S^M$ based on a semi-Lagriangian scheme.

Then, the filtered scheme $S^F$ is defined as
\be\label{eq:FS}
  u^{n+1}_j \equiv S^F(u^n)_j := S^{M}(u^n)_j+\epsilon\dt F\bigg(\frac{S^{A}(u^n)_j-S^{M}(u^n)_j}{\epsilon\dt}\bigg),
\ee
where $\eps=\eps_{\dt,\dx}>0$ is a parameter satisfying
\be\label{eq:epsgotozero}
  \lim_{(\dt,\dx)\converge 0} \eps = 0.
\ee
More hints  on the choice of $\eps$ will be given later on.

The scheme is initialized in the standard way, i.e. 
\be \label{eq:FDexplicit_b} 
   u_j^0 := v_0(x_j),\quad \forall j\in \Z.
\ee

Now we make precise some requirements on $S^M$, $S^A$ and the function $F$.

\medskip\noindent
{\em Definition of the monotone finite difference scheme $S^M$} \\
Following Crandall and Lions~\cite{CL84}, we consider a finite difference scheme written as $u^{n+1}=S^M(u^n)$ with
\be\label{eq:FD}
   S^{M}(u^{n})(x) := u^{n}(x) -\dt\ h^M(x,D^-u^n(x),D^+u^n(x)),
\ee
with 
$$ 
   D^\pm u(x) :=\pm \frac{u(x\pm \dx) - u(x)}{\dx},
$$
where $h^M$ corresponds to a monotone numerical Hamiltonian that will be made precise below.
We will denote also $S^M(u^n)_j := S^M(u^n)(x_j)$.
Therefore the scheme also reads,
for all $j\in \Z$, $\forall n\geq 0$:
\be \label{eq:FDexplicit_a} 
  u^{n+1}_j:= u^{n}_j -\dt\ h^M(x_j,D^-u^n_j,D^+u^n_j), \quad D^{\pm} u^n_j:=\pm \frac{u^n_{j\pm 1}-u^n_j}{\dx}.
\ee

\medskip\noindent
{\bf (A3)  Assumptions on $S^M$}\\[0.0cm]
We will use the following assumptions throughout this paper:\\
\begin{tabular}{ll} 
& $(i)$ $h^M$ is a Lipschitz continuous function.\\
& $(ii)$  (\textit{consistency}) 
  $\forall x$, $\forall u$,\ $ h^M(x,u,u)=H(x,u).$
  \\
& $(iii)$ (\textit{monotonicity}) for any functions $u,v$,\\
&  \hspace{2cm}
  $
   \mbox{$u\leq v$ $\Longrightarrow$  $S^M(u)\leq S^M(v)$}.
  $
\end{tabular}


In pratice condition (A3)-$(iii)$ is only required at mesh points and the condition reads
\be \label{eq:monotcond}
  u_j\leq v_j,\ \forall j, \quad \Rightarrow\quad  S^M(u)_j \leq S^M(v)_j,\ \forall j.
\ee

At this stage, we notice that under condition (A3) the filtered scheme is ``$\eps$-monotone" in the sense that
\be\label{eq:eps-monotone}
 \ u_j\leq v_j,\ \forall j, \quad \Rightarrow\quad  S^M(u)_j \leq S^M(v)_j + \eps\tau,\ \forall j.
\ee
with $\eps\converge 0$ as $(\dt,\dx)\converge 0$. This implies the convergence of the scheme
by  Barles-Souganidis convergence theorem (see \cite{BS91,A09}).

\begin{rem}
Under assumption $(i)$, the consistency property $(ii)$ is equivalent to say that,
for any $v\in C^2([0,T]\times\mathbb{R})$, there exists a constant $C_M\geq 0$ independant of $\dx$ such that
\be
  \label{eq:SM-consist}
  \bigg|  h^M(x,D^-v(x),D^+ v(x)) - H(x,v_x) \bigg| \leq C_M \dx \| \partial_{xx} v  \|_\infty.
\ee
The same statement holds true if \eqref{eq:SM-consist} is replaced by the following consistency error estimate:
\be
   & & \hspace*{-2cm} 
      \cE_{S^M}(v)(t,x) :=
      \bigg|  \frac{v(t+\dt,x)-S^M(v(t,.))(x)}{\dt} - \big( v_t(t,x) + H(x,v_x(t,x))) \bigg|
      \nonumber \\
   & & \qquad \leq \ C_M \bigg(\dt \| \partial_{tt} v  \|_\infty + \dx \| \partial_{xx} v \|_\infty\bigg).
\ee
\end{rem}

\begin{rem}
Assuming $(i)$, it is easily shown that the monotonicity property $(iii)$ is equivalent to say that $h^M=h^M(x,u^-,u^+)$ 
satisfies, a.e. $(x,u^-,u^+ )\in \R^3$:
\be\label{eq:MONOT-1}
  \mbox{$\disp\frac{\partial h^M}{\partial u^-} \geq 0$,\ \ \ $\disp\frac{\partial h^M}{\partial u^+} \leq 0$,}
\ee
and the CFL condition
\be\label{eq:MONOT-2}
 \frac{\dt}{\dx} \bigg(\frac{\partial h^M}{\partial u^-}(x,u^-,u^+) - \frac{\partial h^M}{\partial u^+}(x,u^-,u^+)\bigg) \leq 1.
\ee
When using finite difference schemes, 
it is assumed that the CFL condition \eqref{eq:MONOT-2} 
is satisfied, and that can be written equivalently 
in the form 
\be\label{eq:CFL}
  c_{0} \frac{\dt}{\dx} \leq 1,
\ee
where $c_0$ is a constant independant of $\dt$ and $\dx$.
\end{rem}

\begin{example} 
Let us consider the Lax-Friedrichs numerical Hamiltonian is 
$$
  h^{M,LF}(x,u^-,u^+):=H(x,\frac{u^-+u^+}{2}) - \frac{c_0}{2}(u^+-u^-)
$$
where $c_0>0$ is a constant.
The scheme is consistant; it is furthermore monotone under the conditions
$\max_{x,p}|\partial_p H(x,p)|\leq c_0$, and $c_0\frac{\dt}{\dx}\leq 1$.
\end{example}


%
%
%

\medskip\noindent
{\em Definition of the high--order scheme $S^A$:}
we consider an iterative scheme of ``high--order" in the form $u^{n+1}=S^A(u^n)$,
written as
\begin{small}
\be \label{eq:SA-developped}
  S^{A}(u^{n})(x)= u^{n}(x)-\tau  h^{A}(x,D^{k,-}u^n(x),\dots, D^-u^n(x),D^+u^n(x),\dots,D^{k,+}u^n(x)),\nonumber
\ee
\end{small}
\!\!where $h^A$ corresponds to a ``high-order" numerical Hamiltonian,
and $D^{\ell,\pm}u(x):=\pm \frac{u(x\pm \ell\dx)-u(x)}{\dx}$ for $\ell=1,\dots,k$. To simplify the notations 
we may write \eqref{eq:SA-developped} in the more compact form
\be\label{eq:SA}
  S^{A}(u^{n})(x)= u^{n}(x)-\tau  h^{A}\big(x, D^{\pm} u^n(x))
\ee
even if there is a dependency on $\ell$  in  $(D^{\ell,\pm} u^n(x))_{\ell=1,\dots,k}$.



\medskip\noindent
{\bf (A4)  Assumptions on $S^A$:}\\[0.2cm]
We will use the following assumptions:\\
{\em 
\indent $(i)$ $h^A$ is a Lipschitz continuous function.\\[0.2cm]
\indent $(ii)$  (\textit{high--order consistency}) There exists $k\geq 2$, for all $\ell\in[1,\dots,k]$,
for any function $v=v(t,x)$ of class $C^{\ell+1}$, there exists $C_{A,\ell}\geq 0$,
\be\label{eq:H1consist_a}
  & & \hspace*{-2cm}  
    \cE_{S^A}(v)(t,x) :=
    \bigg|  \frac{v(t+\dt,x)-S^A(v(t,.))(x)}{\dt} -\big( v_t(t,x) + H(x,v_x(t,x))) \bigg|
    \\
  & & \leq \ C_{A,\ell} \bigg (\dt^\ell \|\partial_t^{\ell+1} v\|_\infty +  
                        \dx^\ell \|\partial_x^{\ell+1} v\|_\infty \bigg).
\ee
}
Here $v^\ell_{x}$ denotes the $\ell$-th derivative of $v$  w.r.t.\ $x$. 

\begin{rem}\label{rem:hA-implications}
The high-order consistency implies, 
for all $\ell \in [1,\dots,k]$, and for $v \in C^{\ell+1}(\R)$, 
\beno\label{eq:H1consist-implications}
  \bigg| h^A(x,\dots,D^-v,D^+v,\dots) - H(x,v_x)\bigg|  
  \leq \ C_{A,\ell}  \|\partial_x^{\ell+1} v\|_\infty  \dx^\ell.
\eeno
\end{rem}

\begin{example}\label{rem:CFD-RK2}
{\em (Centered scheme)}
A typical example with $k=2$ is obtained with the centered TVD (Total Variation Diminishing) approximation in space and
the Runge-Kutta 2nd order scheme in time (or Heun scheme):
\begin{subequations} \label{eq:SA-RK2}
\be\label{eq:SA-RK2a}
  & & S_0(u^n)_j:= u^n_j - \dt H(x_j,\frac{u^n_{j+1}-u^n_{j-1}}{2\dx}),
\ee
and
\be\label{eq:SA-RK2b}
  & & S^A(u):= \frac{1}{2}(u + S_0 (S_0(u))).
\ee
\end{subequations}
Of course there is no reason for the centered scheme to be stable (as it will be shown in the numerical section).
Using a filter will help stabilize the scheme.
\end{example}
A similar example with $k=3$ can be obtained with any third order finite difference approximation in space
and the TVD-RK3 scheme in time~\cite{got-shu-98}.

\medskip\noindent
{\em Definition of the filter function $F$.}
We recall that Froese and Oberman's filter function used in~\cite{FO13} is:
\beno
  \tilde F(x) = sign(x)\max(1-||x|-1|,0) = \left\{
  \begin{array}{l l}
    x & \quad |x| \leq 1.\\
    0 & \quad |x|\geq 2.\\
    -x+2 &\quad 1 \leq x\leq 2.\\
    -x-2 &\quad  -2\leq x \leq -1.\\
  \end{array} \right.
\eeno
In the present work we define a new filter function simply as follows:
\be \label{def:Filter}
  F(x) := x 1_{|x|\leq 1} =  \left\{
  \begin{array}{l l}
    x & \quad \text{if} \ |x| \leq 1,\\
    0 & \quad \text{otherwise}.\\
      \end{array} \right.
\ee

\begin{figure}[!h]
\includegraphics[width=0.5\textwidth]{\figs/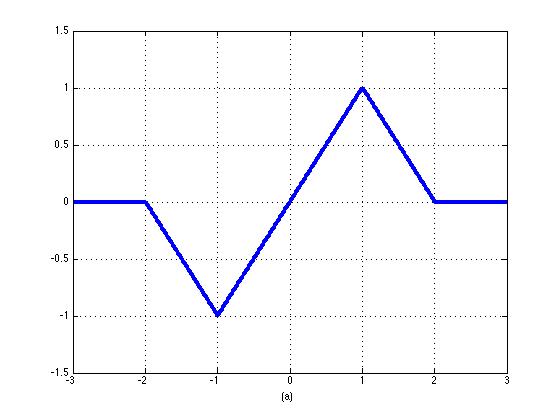}
\includegraphics[width=0.5\textwidth]{\figs/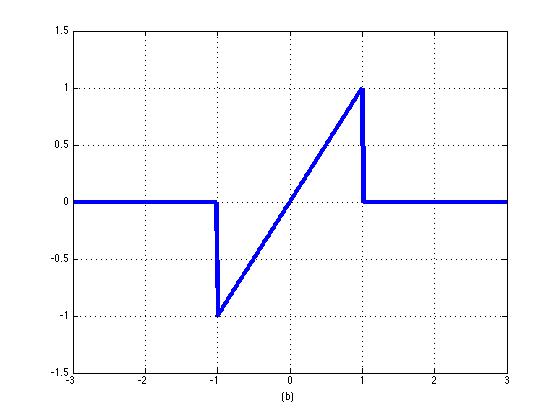}\\
\caption{Froese and Oberman's filter (left), new filter (right)}
\end{figure}

The idea of the present filter function is to keep the high--order scheme when $|h^A-h^M|\leq \eps$
(because then $|S^A-S^M|/(\tau \eps) \leq 1$ and $S^F=S^M + \tau\eps F(\frac{S^A-S^M}{\tau\eps}) \equiv S^A$),
whereas $F=0$ and $S^F=S^M$ if that bound is not satisfied, i.e., the scheme is simply given by the monotone scheme itself.
Clearly the main difference is the discontinuity at $x=-1, 1$.

%

\subsection{Convergence result}
The following theorem gives several basic convergence results for the filtered scheme.
Note that the high-order assumption (A4) will not be necessary to get 
the error estimates $(i)$-$(ii)$. It will be only used to get a high-order consistency error estimate in the regular case
(part $(iii)$). Globally the scheme will have just an $O(\sqrt{\Delta x})$ rate of convergence for just Lipschitz continuous 
solutions because the jumps in the gradient prevent high-order accuracy on the kinks.

\begin{theorem} \label{th:estimate1}
Assume (A1)-(A2), and let $v_0$ be bounded. 
We assume also that $S^M$ satisfies (A3), and $|F|\leq 1$.
Let $u^n$ denote the filtered scheme~\eqref{eq:FS}.
Let $v^n_j:=v(t_n,x_j)$ where $v$ is the exact solution of~\eqref{eq:hj}. 
Assume
\be \label{eq:eps-1} 
  0<\eps \leq c_0\sqrt{\dx}
\ee
for some constant $c_0>0$.

$(i)$ The scheme $u^n$ satisfies the Crandall-Lions estimate
\be
  \|u^{n} - v^{n}\|_{\infty} \leq C \sqrt{\dx},  \quad \forall\ n=0,...,N. 
\ee
for some constant $C$ independent of $\dx$.

$(ii)$ (First order convergence for classical solutions.)
If furthermore the exact solution $v$ belongs to $C^2([0,T]\times\R)$,
and $\eps\leq c_0 \dx$ (instead of \eqref{eq:eps-1}),
then,  we have 
\be
  \|u^{n} - v^{n}\|_{\infty} \leq C \dx,  \quad  n=0,...,N,
\ee
for some constant $C$ independent of $\dx$.

$(iii)$ (Local high-order consistency.)
Assume that $S^A$ is a high--order scheme satisfying (A4) for some $k\geq 2$. 
Let $1\leq \ell \leq k$ and $v$ be a $C^{\ell+1}$ function 
in a neighborhood of a point $(t,x)\in (0,T)\times \R$.
Assume that
\be
  \mbox{$(C_{A,1} + C_M)\ \bigg(\|v_{tt}\|_\infty\, \tau + \| v_{xx}\|_\infty\, \dx \bigg) \leq \eps$.}
\ee
Then, for sufficiently small $t_n-t$, $x_j-x$, $\dt$, $\dx$,
it holds 
$$
  {S^F}(v^n)_j = {S^A}(v^n)_j
$$
and, in particular, a local high-order consistency error for the filtered scheme $S^F$ holds:
$$ {\cE}_{S^F}(v^n)_j \equiv \cE_{S^A}(v^n)_j =  O(\dx^\ell) $$
(the consistency error $\cE_{S^A}$ is defined in \eqref{eq:H1consist_a}).
\end{theorem}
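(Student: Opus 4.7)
The plan hinges on viewing the filtered scheme as an $O(\eps\dt)$ perturbation of the pure monotone scheme. Since $|F|\le 1$, we have the pointwise bound $|S^F(u)_j - S^M(u)_j| \le \eps\dt$. I would therefore introduce the auxiliary sequence $w^n$ obtained by iterating the pure monotone scheme $S^M$ from the same initial data $v_0$, apply the classical Crandall-Lions error analysis to $w^n$, and absorb the perturbation penalty into the bound for $u^n$.

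To make this precise, I would first establish the $L^\infty$-contraction of $S^M$: the finite-difference form \eqref{eq:FD} is invariant under constant shifts (i.e.\ $S^M(u+c)=S^M(u)+c$, since $D^{\pm}$ annihilates constants), and combined with monotonicity (A3)(iii), the standard sandwich trick (apply (A3)(iii) to $u$ and to $v\pm\|u-v\|_\infty$) yields $\|S^M(u)-S^M(v)\|_\infty \le \|u-v\|_\infty$. Writing $u^{n+1} = S^M(u^n) + \eps\dt\,F(\cdot)$ and subtracting $w^{n+1} = S^M(w^n)$, a triangle inequality and induction give $\|u^n - w^n\|_\infty \le n\eps\dt \le T\eps$. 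The classical Crandall-Lions estimate for monotone schemes satisfying (A3) (see \cite{CL84}) supplies $\|w^n - v^n\|_\infty \le C\sqrt{\dx}$ for Lipschitz solutions, and the sharper $C\dx$ bound under the additional $C^2$ regularity assumed in (ii). Combining with $\eps \le c_0\sqrt{\dx}$ (resp.\ $c_0\dx$) then closes (i) and (ii).

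For (iii), the goal is to show that the filter acts as the identity in the smooth region, so that $S^F=S^A$ there. Let $v$ be the exact $C^{\ell+1}$ solution near $(t,x)$; since $v_t + H(\cdot,v_x)\equiv 0$ wherever $v$ is smooth, the consistency estimate for $S^M$ (the unlabelled display following \eqref{eq:SM-consist}) and the $\ell=1$ case of (A4) yield
$$|S^M(v^n)_j - v(t_{n+1},x_j)| \le C_M\,\dt\bigl(\dt\|v_{tt}\|_\infty + \dx\|v_{xx}\|_\infty\bigr),$$
$$|S^A(v^n)_j - v(t_{n+1},x_j)| \le C_{A,1}\,\dt\bigl(\dt\|v_{tt}\|_\infty + \dx\|v_{xx}\|_\infty\bigr).$$
Summing by the triangle inequality and invoking the hypothesis of (iii) gives $|S^A(v^n)_j - S^M(v^n)_j| \le \eps\dt$, so $(S^A(v^n)_j - S^M(v^n)_j)/(\eps\dt) \in [-1,1]$, the filter $F$ acts as the identity there, and $S^F(v^n)_j = S^A(v^n)_j$ as claimed. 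The local $O(\dx^\ell)$ consistency error is then exactly (A4) applied to $v$ with the given $\ell$.

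The only genuinely non-routine ingredient is the classical Crandall-Lions error estimate for the pure monotone scheme: this relies on a doubling-of-variables argument and the CFL condition, but it is a well-established result for schemes satisfying (A3), so I would invoke it from \cite{CL84} rather than reproduce it. Everything else is a triangle inequality, a discrete Gr\"onwall-type induction, and a direct reading of the filter's support.
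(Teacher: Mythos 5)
Your proposal is correct and follows essentially the same route as the paper: treat the filter as an $O(\eps\dt)$ perturbation of the monotone iterate (using $|F|\le 1$), invoke the Crandall--Lions estimate for $S^M$ for part $(i)$, and for $(iii)$ bound $|S^A(v^n)_j-S^M(v^n)_j|/\dt$ by the sum of the two consistency errors so that the filter's argument stays in $[-1,1]$ and $S^F\equiv S^A$. The only cosmetic difference is in $(ii)$, where you route through the auxiliary monotone iterate $w^n$ together with the standard first-order estimate for $S^M$ on $C^2$ solutions, whereas the paper runs the consistency-plus-$\eps$ induction directly on $u^n-v^n$; the two are equivalent, and your explicit observation that monotonicity combined with $S^M(u+c)=S^M(u)+c$ gives $L^\infty$ non-expansiveness makes precise a step the paper leaves implicit.
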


\begin{proof}
$(i)$ Let $w^{n+1}_j=S^M(w^n)_j$ be defined with the monotone scheme only, with $w^0_j=v_0(x_j)=u^0_j$.
By definitions,
\beno
  u^{n+1}_j - w^{n+1}_j
  & = & S^{M}(u^{n})_j - S^M(w^n)_j +\eps \dt F\big(.) 
\eeno
Hence, by using the monotonicity of $S^M$, 
\beno
  \max_j |u^{n+1}_j - w^{n+1}_j| 
  & \leq  & \max_j |u^{n}_j - w^n_j| +\eps \dt,
\eeno
and by recursion, for $n\leq N$,
\beno
  \max_j |u^{n}_j - w^{n}_j|  
  & \leq  & \eps n \dt  \leq  T \eps.
\eeno
On the other hand,  by Crandall and Lions \cite{CL84}, an error estimate holds for the monotone scheme:
\beno 
  \max_j |w^{n}_j-v^n_j| & \leq  & C \sqrt{\dx},
\eeno   
for some $C\geq 0$.
By summing up the previous bounds, we deduce
\beno
  \max_j |u^{n}_j - v^{n}_j|  
  & \leq  & C\sqrt{\dx} +  T \eps,
\eeno
and together with the assumption on $\eps$, it gives the desired result.

$(ii)$  
Let 
$\displaystyle \cE^n_j:=\frac{v^{n+1}_j - S^M(v^n)_j}{\dt}$.
If the solution is $C^2$ regular with bounded second order derivatives, 
then the consistency error is bounded by 
\be\label{eq:Epsnj}
  | \cE^n_j | \leq C_M(\dt + \dx).
\ee
Hence
\beno
 | u^{n+1}_j - v^{n+1}_j|
  & = & | S^{M}(u^{n})_j - S^M(v^n)_j + \dt \cE^n_j + \dt \eps F\big(.)  |\\
  & \leq & 
  \| u^{n} - v^n\|_\infty + \dt \|\cE^n\|_\infty + \dt \eps.
\eeno
By recursion, for $n\dt \leq T$,
\beno
 \|u^{n} - v^{n}\|_\infty
  & \leq & \| u^{0} - v^0\|_\infty + T ( \max_{0\leq k\leq N-1}\|\cE^k\|_\infty + \eps).
\eeno
Finally by using the assumption on $\eps$, the bound \eqref{eq:Epsnj} 
and the fact that $\dt=O(\dx)$ (using CFL condition \eqref{eq:CFL}),
we get the desired result.

$(iii)$
To prove that $S^F(v^n)_j=S^A(v^n)_j$, one has to check that 
$$
   \frac{|S^A(v^n)_j-S^M(v^n)_j|}{\eps\tau} \leq 1
$$
as $(\dt,\dx)\converge 0$.
By using the consistency error definitions, 
\if{
\beno
   \frac{|S^A(v^n)_j-S^M(v^n)_j|}{\tau}
  &  = & 
   \bigg| \frac{v^{n+1}_j - S^A(v^n)_j}{\tau} + v_t(t_n,x_j) + H(x_j,v_x(t_n,x_j)) \\
   &  & \quad     - \bigg( \frac{v^{n+1}_j - S^M(v^n)_j}{\tau}+ v_t(t_n,x_j) + H(x_j,v_x(t_n,x_j))\bigg) \bigg| \\
  &  \leq  &  |\cE_{S^A}(v^n)_j| + |\cE_{S^M}(v^n)_j| \\
  &  \leq  &  (C_{A,1} + C_M) (\tau \|v_{tt}\|_\infty + \dx \| v_{xx}\|_\infty) 
\eeno
}\fi
\beno
   \frac{|S^A(v^n)_j-S^M(v^n)_j|}{\tau}
  &  = & 
    \bigg| \frac{v^{n+1}_j - S^M(v^n)_j}{\tau}+ v_t(t_n,x_j) + H(x_j,v_x(t_n,x_j)) \\
   &  & \quad     - \bigg( \frac{v^{n+1}_j - S^A(v^n)_j}{\tau} + v_t(t_n,x_j) + H(x_j,v_x(t_n,x_j))\bigg) \bigg| \\
  &  \leq  &  |\cE_{S^M}(v^n)_j| + |\cE_{S^A}(v^n)_j|  \\
  &  \leq  &  (C_{A,1} + C_M) (\tau \|v_{tt}\|_\infty + \dx \| v_{xx}\|_\infty) 
\eeno
Hence 
the desired result follows.
\end{proof}



\begin{rem}\label{rem:PROJECTION} {\bf Other approaches.}
It is already known from the original work of Osher and Shu~\cite{OS91} that it is possible to modify an ENO scheme
in order to obtain a convergent scheme. 
For instance, if $D^{\pm,A} u^n_j$ denotes a high--order finite difference derivative estimate (of ENO type),
a projection on the first order finite difference derivative $D^\pm u^n_j$ can be used, 
up to a controlled error (see in particular Remark 2.2 of \cite{OS91}):
$$ \mbox{instead of \ $D^{\pm,A} u^n_j$,} \quad \mbox{use} \  P_{[D^\pm u^n_j, M\dx]}(D^{\pm,A} u^n_j) $$
where $P_{[a,b]}(y)$ is the projection defined by:
$$
  P_{[a,b]}(y):=\left\{ \barr{ll}
    y   & \mbox{if\ \ $a-b\leq y\leq a+b$}\\
    a-b & \mbox{if\ \ $y\leq a-b$}\\
    a+b & \mbox{if\ \ $y\geq a+b$}
    \earr \right.
$$
and $M>0$ is some constant greater than the expected value $\frac{1}{2} |u_{xx}(t_n,x_j)|$.
However, we emphasize that in our approach we do not consider a projection but a perturbation with a filter, which 
is sligthly different.
Indeed, by using a projection into an interval of the form $[a-M\dx,\, a+M\dx]$ where $a=D^\pm u^n_i$,
numerical tests show that we may choose too often one of the extremal values $a\pm M\dx$ which 
is then produces an overall too big error (worse than using the first order finite differences).

Following the present approach, we would rather advice to use, 
$$ \mbox{instead of \ $D^{\pm,A} u^n_j$,} \quad \mbox{the value} \  \tilde P_{[D^\pm u^n_j, M\dx]}(D^{\pm,A} u^n_j) $$
where $\tilde P_{[a,b]}(y)$ is defined by:
$$
  \tilde P_{[a,b]}(y):=\left\{ \barr{ll}
    y   & \mbox{if\ \ $a-b\leq y\leq a+b$}\\
    a & \mbox{if\ \ $y\notin [a-b,a+b]$}\\
    \earr \right.
$$
\end{rem}

\begin{rem}{\bf Filtered semi-Lagrangian scheme.}
Let us consider the case of

\be \label{eq:h}
  & &  H(x,p):= \min_{b\in B} \max_{a\in A}\{-f(x,a,b).p- \ell(x,a,b)\},
\ee
where $A\subset \R^m$ and $B\subset \R^n$ are non-empty compact sets (with $m,n \geq 1$), 
\if{
and $f: \R^d\times {A} \rightarrow \R^d$ 
and $\ell: \R^d\times {A} \rightarrow \R$ are Lipschitz continuous w.r.t.\ $x$:
$\exists L\geq 0$, $\forall a\in {A}$, $\forall x,y$:
\be \label{eq:Lff}
   \max(|f(x,a)- f(y,a)|,|\ell(x,a)-\ell(y,a)|)\ \leq \ L |x-y|.
\ee
}\fi
$f: \R^d\times {A}\times B \rightarrow \R^d$ 
and $\ell: \R^d\times {A}\times B \rightarrow \R$ are Lipschitz continuous w.r.t.\ $x$:
$\exists L\geq 0$, $\forall (a,b)\in {A}\times B$, $\forall x,y$:
\be \label{eq:Lff}
   \max(|f(x,a,b)- f(y,a,b)|,|\ell(x,a,b)-\ell(y,a,b)|)\ \leq \ L |x-y|.
\ee
(We notice that (A2) is satisfied for hamiltonian functions such as \eqref{eq:h}.)
\if{
$$ 
  |H(x_1,p)-H(x_2,p)|\leq L(1+|p|)\,|x_1-x_2|
$$
and 
$$ 
  |H(x,p)-H(x,q)|\leq |p-q|\, \max_{a,b} |f(x,a,b)| \leq |p-q|\, (\max_{a,b}|f(0,a,b)| + L|x|).
$$
}\fi
Let $[u]$ denote the $P^1$-interpolation of $u$ in dimension one on the mesh $(x_j)$, i.e. 
\be\label{interpolation}
   x\in[x_j,x_{j+1}] \quad \Rightarrow \quad 
  [u](x) := \frac{x_{j+1}-x}{\dx} u_j+\frac{x-x_{j}}{\dx} u_{j+1}.
\ee
Then a monotone SL scheme can be defined as follows:
\be\label{eq:sl}
   S^M(u^n)_j := \displaystyle \min_{a\in A} \max_{b\in B} 
    \bigg( [u^{n}]\big(x_j+ \dt f(x_j,a,b)\big)+ \dt \ell (x_j,a,b) \bigg).
\ee 
A filtered scheme based on SL can then be defined by \eqref{eq:FS} and \eqref{eq:sl}.
Convergence result as well as error estimates could also be obtained in this framework.
(For error estimates for the monotone SL scheme, we refer to~\cite{sor-98, FF14}.)
\end{rem}

\if{
\begin{rem}
In the case when $D(x)$ is convex then the scheme takes the more usual form
\be\label{eq:sl-b}
  S^M(u^n)_j = \displaystyle \min_{a\in A}[u^{n}](x_j+ \dt f(x_j,a)+ \dt \ell (x_j,a)).
\ee 
\end{rem}
}\fi

\if{
\begin{prop}
(i) Assume (A1) ($v_0$ is Lipschitz continuous). 
Assume furthermore that\\
\indent - if $f=f(x,a)$: $\forall x$, $f(x,A)$ is convex.\\
\indent - if $f=f(x,a,b)$: we assume $f(x,a,b)=g_1(x,b)\cdot A  + g_2(x,b)$ with $A$ convex, and $B$ compact.\\
Then the following consistency error holds, $\forall n\leq N$, $\forall j$:
$$ |\cE_{S^M}(v^n)_j| \leq C (\tau + \frac{\dx}{\tau}).
$$
\end{prop}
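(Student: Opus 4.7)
The plan is to establish the bound $|v^{n+1}_j - S^M(v^n)_j| \leq C(\tau^2 + \dx)$, which after division by $\tau$ (combined with $v_t + H(x,v_x)=0$ at points of differentiability of the Lipschitz viscosity solution) yields the stated $|\cE_{S^M}(v^n)_j| \leq C(\tau + \dx/\tau)$. The overall strategy is a DPP comparison argument for the semi-Lagrangian min-max scheme.

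First I would invoke the Dynamic Programming Principle for the viscosity solution $v$ of the Isaacs equation \eqref{eq:hj} with Hamiltonian \eqref{eq:h}:
$$ v(t_{n+1}, x_j) = \mathrm{val}_{(\alpha,\beta)}\!\left\{\int_0^\tau \ell(y_{\alpha,\beta}(s), \alpha(s), \beta(s))\,ds + v(t_n, y_{\alpha,\beta}(\tau))\right\}, $$
where $y_{\alpha,\beta}$ solves $\dot y = f(y,\alpha,\beta)$, $y(0)=x_j$, and $\mathrm{val}$ is the Elliott--Kalton value matching the $\min_b\max_a$ structure of $H$. Using Lipschitz continuity of $f$ and $\ell$ in $x$ (from \eqref{eq:Lff}) and of $v(t_n,\cdot)$ (uniformly in $n$, inherited from (A1) by standard HJ estimates), a Taylor/Gronwall argument gives, for any constant controls $(a,b)\in A\times B$,
$$ v(t_n, y_{a,b}(\tau)) + \int_0^\tau \ell(y_{a,b}(s), a, b)\,ds = v(t_n, x_j + \tau f(x_j,a,b)) + \tau\ell(x_j,a,b) + O(\tau^2). $$
The central step is then to show that restricting the DPP to \emph{constant} controls only costs $O(\tau^2)$:
$$ v(t_{n+1}, x_j) = \min_{a\in A}\max_{b\in B}\bigl[v(t_n, x_j + \tau f(x_j,a,b)) + \tau\ell(x_j,a,b)\bigr] + O(\tau^2). $$

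Second I would handle the interpolation error. Since $v(t_n,\cdot)$ is $L_v$-Lipschitz uniformly in $n$, the $P^1$ interpolation satisfies $\|v(t_n,\cdot) - [v(t_n,\cdot)]\|_\infty \leq L_v\dx$. Combined with the elementary bound $|\min_a\max_b A(a,b) - \min_a\max_b B(a,b)| \leq \sup_{a,b}|A(a,b)-B(a,b)|$, this yields
$$ \left|\min_a\max_b\bigl[v(t_n, x_j+\tau f) + \tau\ell\bigr] - S^M(v^n)_j\right| \leq L_v\dx. $$
Adding the two error contributions and dividing by $\tau$ produces the announced bound.

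The main obstacle is the constant-controls reduction, and this is exactly where the structural hypothesis on $f$ enters. A priori the DPP is taken over non-anticipating strategies, not over points of $A\times B$. If $f=f(x,a)$ with $f(x,A)$ convex, Filippov's selection lemma lets one replace any measurable velocity $f(y(s),\alpha(s))\in f(y(s),A)$ on $[0,\tau]$ by a constant velocity in $f(x_j,A)$, paying only an $O(\tau^2)$ deviation on the trajectory. If $f(x,a,b)=g_1(x,b)\cdot a + g_2(x,b)$ with $A$ convex and $B$ compact, the affine dependence on $a$ together with compactness of $B$ allows one to freeze $\beta(\cdot)$ via a continuity/selection argument on the short interval $[0,\tau]$ and then freeze $\alpha(\cdot)$ by Filippov applied to the convex set $g_1(x_j,b)\cdot A + g_2(x_j,b)$. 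Once this reduction is secured, the remaining steps are routine Taylor and interpolation estimates.
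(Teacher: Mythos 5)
Your strategy is the same as the paper's: write $v^{n+1}_j$ via the dynamic programming principle, reduce to constant controls at a cost of $O(\tau^2)$, absorb the $P^1$-interpolation error $O(\dx)$, and divide by $\tau$. In fact the paper's own proof carries out only the easy half of this program --- the upper bound $v^{n+1}_j - S^M(v^n)_j \leq L_{v_0}e^{LT}\dx + C\tau^2$, obtained by restricting the infimum in the DPP to constant controls --- and stops at the reverse inequality with the words ``to be continued''. So you have correctly located the crux, and your proposal attempts strictly more than what the paper actually establishes.

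However, your treatment of that crux has a genuine gap. For the lower bound you take a near-optimal measurable control $\alpha$ and replace it by a constant $a^*$ whose velocity matches the time average $\frac{1}{\tau}\int_0^\tau f(x_j,\alpha(s))\,ds$, which indeed lies in $f(x_j,A)$ by convexity and closedness. This controls $v^n(y^{\alpha}(\tau)) - v^n(x_j+\tau f(x_j,a^*))$ up to $O(\tau^2)$, but it says nothing about the running cost: there is no reason why $\tau\,\ell(x_j,a^*) \leq \int_0^\tau\ell(x_j,\alpha(s))\,ds + O(\tau^2)$. Take $f(x,a)=a$, $A=[-1,1]$, $\ell(x,a)=-a^2$, and $v^n$ Lipschitz with a kink at $x_j$ (e.g.\ $v^n(y)=2|y-x_j|$): a control oscillating between $\pm1$ has zero average velocity and average cost $-1$, while the velocity-matching constant control $a^*=0$ has cost $0$, a discrepancy of order $\tau$, not $\tau^2$; since $v^n$ is merely Lipschitz nothing cancels, and the claimed one-step bound degrades to $O(\tau)$, i.e.\ $\cE_{S^M}=O(1)$. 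Closing this requires either $\ell$ independent of the control, or convexity of the augmented set $\{(f(x,a),\ell(x,a)):a\in A\}$ together with an Aumann/relaxed-control argument applied to the pair $(f,\ell)$ --- to be fair, the proposition's own hypotheses are arguably too weak on exactly this point, and the paper's unfinished proof never reaches it. A second, smaller issue: in the game case the reduction from nonanticipating Elliott--Kalton strategies to constant pairs $(a,b)$ with only an $O(\tau^2)$ loss is the genuinely delicate part of the consistency analysis for Isaacs equations; ``a continuity/selection argument on the short interval'' is a placeholder rather than a proof (just as the paper's ``the arguments in the general case being very similar'' is).
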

\begin{proof}
Let us assume that $f(x,a,b)=f(x,a)$ and $\ell(x,a,b)=\ell(x,a)$, the arguments in the general case beeing very similar.
Equation \eqref{eq:hj} is the Bellman equation for a finite horizon control problem 
(see ~\cite{BCD97}, Ch. III  for details). 
More precisely, 
we  consider the controlled system (whose solution is denoted $y=y_x^\alpha$)
\be\label{eq:csys}
  \dot y(t)=f(y(s),\alpha(s)), \quad \text{ for a.e.}\quad s\geq 0 \quad y(0)=x,
\ee
where $\alpha\in \cA$, the set of  measurable functions: $(0,\infty) \rightarrow A$
(Caratheodory solutions are considered in~\eqref{eq:csys}).
Let the value function $v$ be defined by
\be\label{v}
  v(t,x) :=\inf_{\ma\in\cA} \ \ v_{0}(y_{x}^{\alpha}(t))+\int_0^t \! \ell(y_{x}^{\alpha}(s),\ma(s))\, ds.
\ee
Then $v$ satisfies, for $h\geq 0$:
\be\label{eq:vdpp}
  v(t+h,x)=\inf_{\ma\in \cA} v(t,y^\ma_x(h)) + \int_{0}^{h} \ell(y^\ma_x(s),\ma(s))\, ds.
\ee
It can be shown that $v$ is the unique solution of \eqref{eq:hj} in the viscosity sense. We will use the formula 
\eqref{eq:vdpp} in order to get the consistency error.

From the definition of $v$ and using Gronwall's Lemma we first obtain that 
$|v(t,x_2)-v(t,x_1)|\leq L_{v_0} e^{LT} |x_2-x_1|$ where $L_{v_0}$ is the Lipschitz constant of $v_0$.
The Lipschitz property for $v(t_n,.)=v^n$ implies a control on the interpolation error:
\be
  \|[v^n]-v^n\|_\infty \leq L_{v_0} e^{LT} \dx.
\ee

We first consider constant controls $\ma(s)=a\in A$, for $s\in [0,h]$. We note that $y_x^a(h)=x + h f(x,a) + O(h^2)$,
and therefore it is then deduced from \eqref{eq:vdpp} and the Lipschitz property of $v(t_n,.)$ that
$$ v^{n+1}_j \leq \sup_{a\in A} v^n(x_j + h f(x_j,a)) + h  \ell(x_j,a) + Ch^2,$$
for some constant $C\geq 0$.
Hence 
\be v^{n+1}_j - S^M(v^n)_j
  & \leq & 
      \sup_{a\in A} \bigg( v^n (x_j + \dt f(x_j,a)) + \dt \ell(x_j,a) \bigg) \nonumber\\
  & & \hspace{1cm}  - \sup_{a\in A} \bigg([v^n](x_j + \dt f(x_j,a)) + \dt \ell(x_j,a) \bigg) + C\dt^2 \nonumber \\
  & \leq & \| v^n-[v^n] \|_\infty + C\dt^2\\
  & \leq &  L_{v_0} e^{LT} \dx + C\dt^2.
\ee
This gives the desired upper bound for error consistency.

Now we want to prove a reverse inequality. Let $C>0$.  
There exists a sequence of controls $\ma_h$ such that
$$
  v^{n+1}_j \geq v^n(y_x^{\ma_h}(h)) + \int_0^h \ell(y_x^{\ma_h}(s),\ma_h(s))ds - C h^2.
$$
\begin{center}
\fbox{to be continued ...}
\end{center}
\end{proof}
}\fi






\if{
\begin{theorem}[{\bf convergence of filtered SL schemes}]
Let assumptions (A1)-(A2'), and $v_0$ bounded.
Then  for all $n\dt \leq T$, 
\beno
  \max_j |v^n(x_j)-u^n(x_j)| \leq C (\eps + \dt+\frac{\dx}{\dt}).
\eeno
In particular by choosing $\eps\leq c_0\sqrt{\dx}$ and $\dt= c_1\sqrt{\dx}$ the error behaves as $O(\sqrt{\dx})$.
\end{theorem}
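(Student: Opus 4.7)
The plan is to mirror the structure used for the finite-difference case in Theorem~\ref{th:estimate1}$(i)$, splitting the error into the filter perturbation and the underlying monotone SL error. I introduce the auxiliary sequence $w^n$ obtained by iterating only the monotone semi-Lagrangian operator~\eqref{eq:sl} from $w^0_j=v_0(x_j)=u^0_j$, and use the triangle inequality
$$ |u^n_j-v^n_j|\ \leq\ |u^n_j-w^n_j|\ +\ |w^n_j-v^n_j|. $$

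First I would estimate $|u^n_j-w^n_j|$. From the filtered recursion~\eqref{eq:FS},
$$ u^{n+1}_j-w^{n+1}_j \ =\ S^M(u^n)_j - S^M(w^n)_j + \eps\dt\,F\Bigl(\tfrac{S^A(u^n)_j-S^M(u^n)_j}{\eps\dt}\Bigr), $$
so using $|F|\leq 1$ together with the monotonicity of $S^M$ (which, for the SL operator~\eqref{eq:sl}, follows from the monotonicity of the $P^1$ interpolation operator $[\cdot]$ and the fact that taking $\min_a\max_b$ preserves monotone dependence on $u^n$), one obtains
$$ \max_j|u^{n+1}_j-w^{n+1}_j|\ \leq\ \max_j|u^n_j-w^n_j|+\eps\dt. $$
Iterating from $w^0=u^0$ yields $\max_j|u^n_j-w^n_j|\leq n\dt\,\eps\leq T\eps$.

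Second, I would invoke the classical convergence rate for the monotone semi-Lagrangian scheme under (A1)--(A2),
$$ \max_j|w^n_j-v^n_j|\ \leq\ C\Bigl(\dt+\frac{\dx}{\dt}\Bigr), $$
where the $O(\dt)$ piece is the one-step consistency error associated with the dynamic-programming discretization of the controlled dynamics, and the $O(\dx/\dt)$ piece comes from the $O(\dx)$ $P^1$-interpolation error accumulated over $n\leq T/\dt$ time steps. This is the standard Falcone--Ferretti/Soravia bound; see \cite{sor-98, FF14}.

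Combining the two bounds via the triangle inequality gives exactly $\max_j|u^n_j-v^n_j|\leq C(\eps+\dt+\dx/\dt)$, which is the claim. The balancing statement is then immediate: with $\eps\leq c_0\sqrt{\dx}$ and $\dt=c_1\sqrt{\dx}$ one has $\dx/\dt=\sqrt{\dx}/c_1$, so all three terms are $O(\sqrt{\dx})$. The only delicate step is the quoted SL error estimate under general (A1)--(A2), in particular for the two-player min--max Hamiltonian~\eqref{eq:h}; I would rely on the cited references rather than reproducing it, since the filtered-scheme argument per se only contributes the trivial $T\eps$ term coming from the bounded filter.
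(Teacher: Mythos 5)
Your proposal follows essentially the same route as the paper: the paper likewise introduces the pure monotone SL iterates $w^n$, derives $\max_j|u^n_j-w^n_j|\le T\eps$ from the monotonicity of $S^M$ and $|F|\le 1$, and then appeals to known error estimates for the monotone SL scheme to control $|w^n_j-v^n_j|$. The only divergence is that the paper routes that second term through the semi-discrete value function, arriving at $O(\sqrt{\dt})+O(\dx/\dt)$ rather than your $O(\dt)+O(\dx/\dt)$ --- this is exactly the delicate min--max (game) rate question you flag and defer to \cite{sor-98, FF14}, and the paper leaves it at the same level of justification.
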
 
\begin{proof}
As in the proof for the filtered finite difference schemes,
if $w^{n+1}_j=S^M(w^n)_j$ is defined with the monotone SL scheme, and with $w^0_j=v_0(x_j)$,
then, for $n\leq N$,
\beno
  \max_j |u^{n}_j - w^{n}_j|  
  & \leq  & \eps n \dt \  \leq \  T \eps.
\eeno
There remains to find an error estimate for $|v^n_j - w^n_j|$.

It can be proved that $u^n$ is Lipschitz regular. Then, the error between the semi-discrete scheme (say $\vartheta^n$)
and the fully discrete scheme is bounded by $O(\frac{\dx}{\dt})$. Finally, it is known, for the semi-discrete scheme 
that 

\begin{center}
\fbox{CHECK : Classical estimate for monotone SL scheme for games with finite horizon}
\fbox{(Maurizio)}
\end{center}
$$
   \max_j |v^n_j - \vartheta^n_j| \leq C \sqrt{\tau}. 
$$
{\color{blue}
  (NB for $H(x,p)=max_{a\in A} (-f(x,a) p - \ell(x,a))$ I see a direct bound in $O(\tau)$, for games I am not sure).
}

Hence the global bound 
$$
   \max_j |v^n_j - w^n_j| \leq C (\sqrt{\tau} + \frac{\dx}{\tau}).
$$ 
\end{proof}
}\fi

\if{
\beno\label{eq:sl1}
\|u^{n} - v^{n}\|_{\infty}
  \leq \|S^{M}(u^{n-1})-S^{M}(v^{n-1})\|_{\infty}+\|S^{M}(v^{n-1})-v^{n}\|_\infty+\eps\dt \|F\|_\infty.
\eeno
By using the monotonicity of $S^M$, we get 
\beno
\|u^{n} - v^{n}\|_{\infty}\leq \|u^{n-1}-v^{n-1}\|_{\infty}+\|S^{M}(v^{n-1})-v^{n}\|_\infty+\eps \dt \|F\|.\\
\eeno
By the definition of filter function we know that $\|F\|\leq 1$, and consistency error estimate \eqref{eq:H1consist_c} of semi-Lagrangian scheme \eqref{eq:sl}, we get 
\beno
  \|u^{n} - v^{n}\|_{\infty}&\leq \|u^{n-1}-v^{n-1}\|_{\infty} + L \big(\dt+\frac{\dx}{ \dt}\big )\dt+\eps \dt.
\eeno
Hence by recursion, we get
\beno
  \|u^{n} - v^{n}\| _{\infty}& \leq \|u^{0} - v^{0}\|_{\infty} + LT \big(\dt+\frac{\dx}{ \dt}\big )+T\eps.
\eeno
If we choose $\dt=\sqrt{\dx}$ and $\eps \leq C\sqrt{\dx}$ for $C\geq 0$ , we get the desired result.
\beno
  \|u^{n} - v^{n}\|_{\infty} \leq C \sqrt{ \dx},   
\eeno
}\fi

\subsection{Adding a limiter} \label{sec:limiter}
The basic filter scheme \eqref{eq:FS} is designed to be of high--order where the solution 
is regular and when there is no viscosity aspects.
However, for instance in the case of front propagation, 
it can be observed that the filter scheme may let small errors occur
near extrema, 
when two possible directions of propagation occur in the same cell.

This is the case for instance near a minima for an eikonal equation.
In order to improve the scheme near extrema, we propose to introduce a limiter
before doing the filtering process.
Limiting correction will be needed only when there is some viscosity aspect 
(it is not needed for advection).

Let us consider the case of front propagation,
i.e., equation of type \eqref{eq:hj}, now with
\be
   H(x,v_x)= \max_{a\in A} \big( f(x,a) v_x \big)
\ee
(i.e., no distributive cost in the Hamiltonian function).

In the one-dimensional case, a viscosity aspect may occur at a minima 
detected at mesh point $x_i$ if 
\begin{eqnarray}
\label{eq:changesigns}
  \mbox{ $\min_a f(x_j,a) \leq 0$ \ \  and \ \  $\max_a f(x_j,a)\geq 0$.}
\end{eqnarray}
In that case, the solution should not go below the local minima around this point,
i.e., we want 
\be\label{eq:limitmin}
   u^{n+1}_j \geq u_{min,j}:=\min(u^n_{j-1},u^n_j, u^n_{j+1}), 
\ee
and, in the same way, we want to impose that 
\be\label{eq:limitmax}
  u^{n+1}_j \leq u_{max,j}:=\max(u^n_{j-1},u^n_j, u^n_{j+1}).
\ee

If we consider the high-order scheme to be of the form 
$u^{n+1}_j=u^n_j - \dt h^A(u^n)$, 
then the limiting process amounts to saying that 
$$
  h^A(u^n)_j\leq  h^{max}_j:=\frac{u^n_j - u_{min,j}}{\dt}.
$$
and 
$$
  h^A(u^n)_j\geq  h^{min}_j:=\frac{u^n_j - u_{max,j}}{\dt}.
$$
This amounts to define a limited $\bar h^A$ such that, if \eqref{eq:changesigns} holds at mesh point $x_j$,
then 
$$
  \bar h^A(u^n)_j:=\min\bigg(\max(h^A(u^n)_j,\, h^{min}_j),\, h^{max}_j\bigg).
$$
and, otherwise,
$$
  \bar h^A_j :\equiv h^A_j.
$$ 
Then the filtering process is the same, using $\bar h^A$ instead of $h^A$ 
for the definition of the high-order scheme $S^F$.

For two dimensional equations a similar limiter could be developped in order to make the scheme more efficient 
at singular regions. However, for the numerical tests
of the next section (in two dimensions) we will simply limit the scheme by using an equivalent of 
\eqref{eq:limitmin}-\eqref{eq:limitmax}. Hence, instead of the scheme value $u^{n+1}_{ij}=S^A(u^n)_{ij}$
for the high--order scheme, we will update the value by
\be\label{eq:2dlimiter}
  u^{n+1}_{ij}=\min(\max(S^A(u^n)_{ij},u^{min}_{ij}),u^{max}_{ij}),
\ee
where
$u^{min}_{ij}=\min(u^n_{ij},u^n_{i\pm 1,j}, u^n_{i,j\pm 1})$ and
$u^{max}_{ij}=\max(u^n_{ij},u^n_{i\pm 1,j}, u^n_{i,j\pm 1})$.

\subsection{How to choose the parameter $\eps$: a simplified approach}
The scheme should switch to high--order scheme when some regularity of the data is detected, and in that case we should have
\beno
  \bigg| \frac{S^A(v)-S^M(v)}{\eps\tau} \bigg| \ = \ 
  \bigg| \frac{h^{A}(\cdot)-h^{M}(\cdot)}{\epsilon} \bigg|\ \leq\  1.
\eeno
In a region where a function $v=v(x)$ is regular enough, by using Taylor expansions, 
zero order terms in $h^A(x,D^\pm v)$ and $h^M(x,D^\pm v)$ vanish (they are both equal to $H(x,v_x(x))$)
and it remains an estimate of order $\dx$. More precisely, by using the high--order property (A4) we have 
$$h^A(x_j,D^\pm v_j)= H(x_j,v_x(x_j)) + O(\dx^2).$$
On the other hand, by using Taylor expansions,
$$Dv^\pm_j = v_x(x_j) \pm \frac{1}{2} v_{xx}(x_j) \dx + O(\dx^2),$$
Hence, denoting $h^M = h^M(x,u^-,u^+)$, it holds at points where $h^M$ is regular,
$$  
   h^M(x_j, Dv^-_j, Dv^+_j) = H(x_j,v_x(x_j)) + \frac{1}{2} v_{xx}(x_j) \bigg( 
   \frac{\partial h^M_j}{\partial u^+} - 
   \frac{\partial h^M_j}{\partial u^-} 
   \bigg)
   + O(\dx^2).
$$ 
Therefore,
\beno
  |h^{A}(v)-h^{M}(v)| 
  & = & 
  \frac{1}{2} |v_{xx}(x_j)|\, 
  \bigg| \frac{\partial h^M_j}{\partial u^+} - \frac{\partial h^M_j}{\partial u^-} \bigg|\,\dx + O(\dx^2).
\eeno
Hence we will make the choice to take $\eps$ roughly such that 
\be \label{eq:eps-rough-estimate}
 \frac{1}{2} |v_{xx}(x_j)|\,
   \bigg|   \frac{\partial h^M_j}{\partial u^+}
          - \frac{\partial h^M_j}{\partial u^-}
     \bigg|\,\dx \leq \eps
\ee
(where $h^M_j=h^M(x_j,v_x(x_j),v_x(x_j))$).
Therefore, if at some point $x_j$ \eqref{eq:eps-rough-estimate} holds, then the scheme will switch to the 
high-order scheme. Otherwise, when the expectations from $h^M$ and $h^A$ are different enough, the scheme will switch
to the monotone scheme.

In conclusion we have upper and lower bound for the switching parameter $\eps$:
\begin{itemize}
\item
Choose $\epsilon \leq  c_0 \sqrt{\Delta x}$ for some constant $c_0>0$ in order
that the convergence and error estimate result holds (see Theorem~\ref{th:estimate1}).

\item
Choose $\eps \geq c_1 \dx$, where $c_1$ is sufficiently large. This constant should be choosen roughly such 
that 
$$
  \frac{1}{2} \|v_{xx}\|_\infty 
  \bigg\| \frac{\partial h^M}{\partial u^+}(.,v_x,v_x) - \frac{\partial h^M}{\partial u^-}(.,v_x,v_x) \bigg\|_\infty \leq c_1.
$$
where the range of values of $v_x$ and $v_{xx}$ can be estimated, in general, from the values of $(v_0)_x$, $(v_0)_{xx}$
and the Hamiltonian function $H$. Then the scheme is expected to switch 
to the high-order scheme where the solution is regular.
\end{itemize}



\section{Numerical tests}

In this section we present several numerical tests in one and two dimensions. Unless otherwise indicated, the filtered scheme will refer to the scheme 
where the high-order Hamiltonian is the centered scheme in space (see Remark~\ref{rem:CFD-RK2}), with Heun (RK2) scheme
discretization in time (see in particular Eqs.~\eqref{eq:SA-RK2a}-\eqref{eq:SA-RK2b}).
Hereafter this scheme will be referred as the ``centered scheme".

The monotone finite difference scheme and function $h^M$ will be made precise for each example.

For the filtered scheme, unless otherwise precised, the switching coefficient 
$ \eps=5\dx.  $
will be used.
In practice we have numerically observed that taking 
$\eps=c_1\dx$ with $c_1$ sufficiently large does not much change the numerical results in the following tests.
All the tested filtered schemes (apart from the steady and obstacle equations)
enters in the convergence framework of the previous section,
so in particular there is a theoretical convergence of order $\sqrt{\dx}$ under the usual CFL condition.

In the tests, the filtered scheme will be in general compared to a second order ENO scheme 
(for precise definition, see Appendix~\ref{app:A}), as well as the centered (a priori unstable)
scheme without filtering.

In several cases, local error in the $L^2$ norms are computed in some subdomain $D$, which, at a given time $t_n$,
corresponds to
$$
  e_{L^2_{loc}}:= \bigg(\sum_{\{i,\ x_i\in D\}} |v(t_n,x_i) - u^n_i|^2\bigg)^{1/2}
$$

\medskip


The first two numerical examples deal with one-dimensional HJ equations,
examples 3 and 4 are concerned with two-dimensional HJ equations, and the last three examples will concern a one-dimensional 
steady equation and two nonlinear one-dimensional obstacle problems.


\medskip\noindent
{\bf Example 1.} {\bf Eikonal equation.}
We consider the case of 
\be\label{eq:eikonal}
  & & v_t + |v_x|=0, \quad  t\in(0,T), \ x\in (-2,2),\\
  & & v(0,x)=v_0(x):= \max(0,1-x^2)^4, \quad x\in(-2,2)\label{eq:v02a}. 
\ee

In Table~\ref{tab:ex2-1}, we compare the filtered scheme (with $\eps=5\dx$) with the centered scheme and 
the ENO second order scheme, with CFL $=0.37$ and terminal time $T=0.3$.
For the filtered scheme, the monotone hamiltonian used is $h^M(x,v^-,v^+):=max(v^-,-v^+)$. 

As expected, we observe that the centered scheme alone is unstable.
On the other hand, the filtered and ENO schemes are numerically comparable in that case, and second order convergent
(the results are similar for the $L^1$ and the $L^\infty$ errors).

Then, in Table~\ref{tab:ex2-2}, we consider the same PDE but with the following reversed initial data:
\be
  \label{eq:v02b}
  & & \tilde v_0(x):=-\max(0,1-x^2)^4, \quad x\in(-2,2).
\ee
In that case the centered scheme alone is unbounded. 
The filtered scheme (with $\eps=5\dx$) is second order.
However, here, the limiter correction as described in section \eqref{sec:limiter}
was needed in order to get second order behavior.
We also observe that the filtered scheme gives better results than the ENO scheme.
(We have also numerically tested the ENO scheme with the same limiter correction but it does not improve the behavior 
of the ENO scheme alone).

In conclusion, this first example shows firstly, that the filtered scheme can stabilize an otherwise unstable scheme,
and secondly that it can give the desired second order behavior.

\begin{table}[\tablepos]
\begin{center}
\begin{tabular}{|cc|cc|cc|cc|}
\hline
 &  
   & \multicolumn{2}{|c|}{filtered ($\eps= 5\dx$)} & \multicolumn{2}{|c|}{centered} & \multicolumn{2}{|c|}{ENO2} \\
\hline
  $M$ & $N$  & $L^2$ error   & order  & $L^2$ error    & order  &  $L^2$ error   & order  \\
\hline
\hline 
   40 &    9  & 7.51E-03 &   -    & 1.18E-01 &   -    & 1.64E-02 &   -     \\
   80 &   17  & 3.36E-03 &  1.16  & 1.14E-01 &  0.06  & 4.38E-03 &  1.91   \\
  160 &   33  & 8.02E-04 &  2.07  & 1.13E-01 &  0.00  & 1.19E-03 &  1.87   \\
  320 &   65  & 1.80E-04 &  2.16  & 1.13E-01 &  0.00  & 3.22E-04 &  1.89   \\
  640 &  130  & 4.53E-05 &  1.99  & 1.13E-01 &  0.00  & 8.22E-05 &  1.97   \\
\hline
\end{tabular}
\caption{(Example 1 with initial data \eqref{eq:v02a})
$L^2$ errors for filtered scheme, centered scheme, and ENO second order scheme 
\label{tab:ex2-1}
}
\end{center}
\end{table}

\begin{table}[\tablepos]
\begin{center}
\begin{tabular}{|cc|cc|cc|cc|}
\hline
  \multicolumn{2}{|c|}{ } 
   & \multicolumn{2}{|c|}{filtered ($\eps= 5\dx$)} & \multicolumn{2}{|c|}{centered} & \multicolumn{2}{|c|}{ENO2} \\
\hline
  $M$ & $N$  &  error   & order  & error    & order  &  error   & order  \\
\hline
\hline 

   40  &    9   & 1.27E-02 &   -        & 2.03E-02  &   -    & 2.60E-02 &        -     \\ 
   80  &   17  & 3.17E-03 &  2.00  & 8.96E-03  &  1.18  &  8.00E-03  & 1.70 \\ 
  160 &   33  & 7.90E-04 &  2.01  & 1.06E-02  & -0.24  & 2.50E-03  &  1.68 \\ 
  320 &   65  & 1.97E-04 &  2.00  & 1.26E-01  & -3.57  & 7.80E-04  &  1.68 \\ 
  640 &  130 & 4.92E-05 &  2.00  & 1.06E+02  & -9.71  & 2.44E-04 &  1.67 \\ 
\hline
\end{tabular}
\caption{(Example 1 with initial data \eqref{eq:v02b}.)
$L^2$ errors for filtered scheme, centered scheme, and ENO second order scheme.
\label{tab:ex2-2}
}
\end{center}
\end{table}


\newcommand{\wfact}{0.38}
\begin{figure}[!h]
\vspace{-1cm}
\hspace*{-0.5cm}
{\includegraphics[width=\wfact\linewidth]{\figsnew/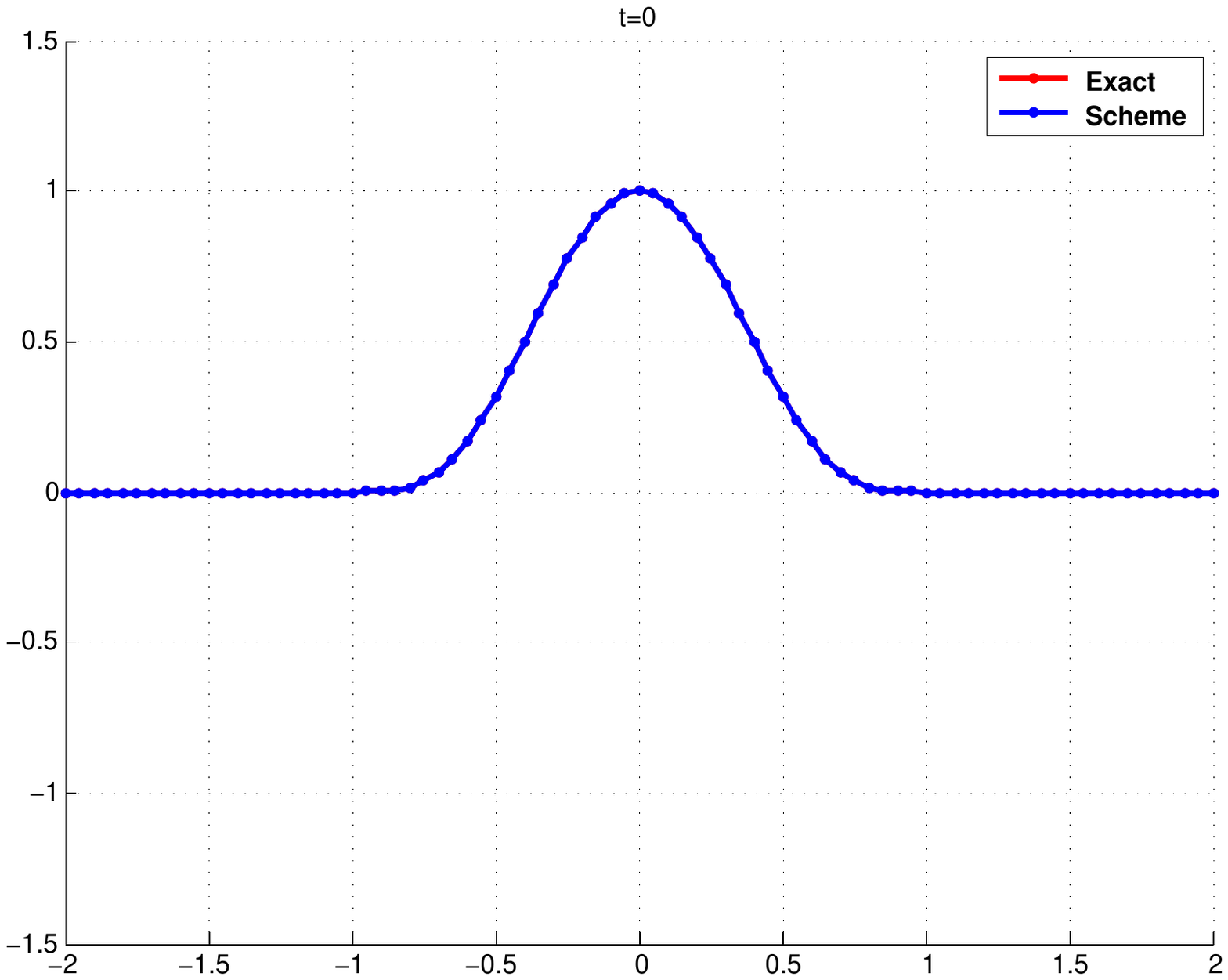}}\hspace*{-0.5cm}
{\includegraphics[width=\wfact\linewidth]{\figsnew/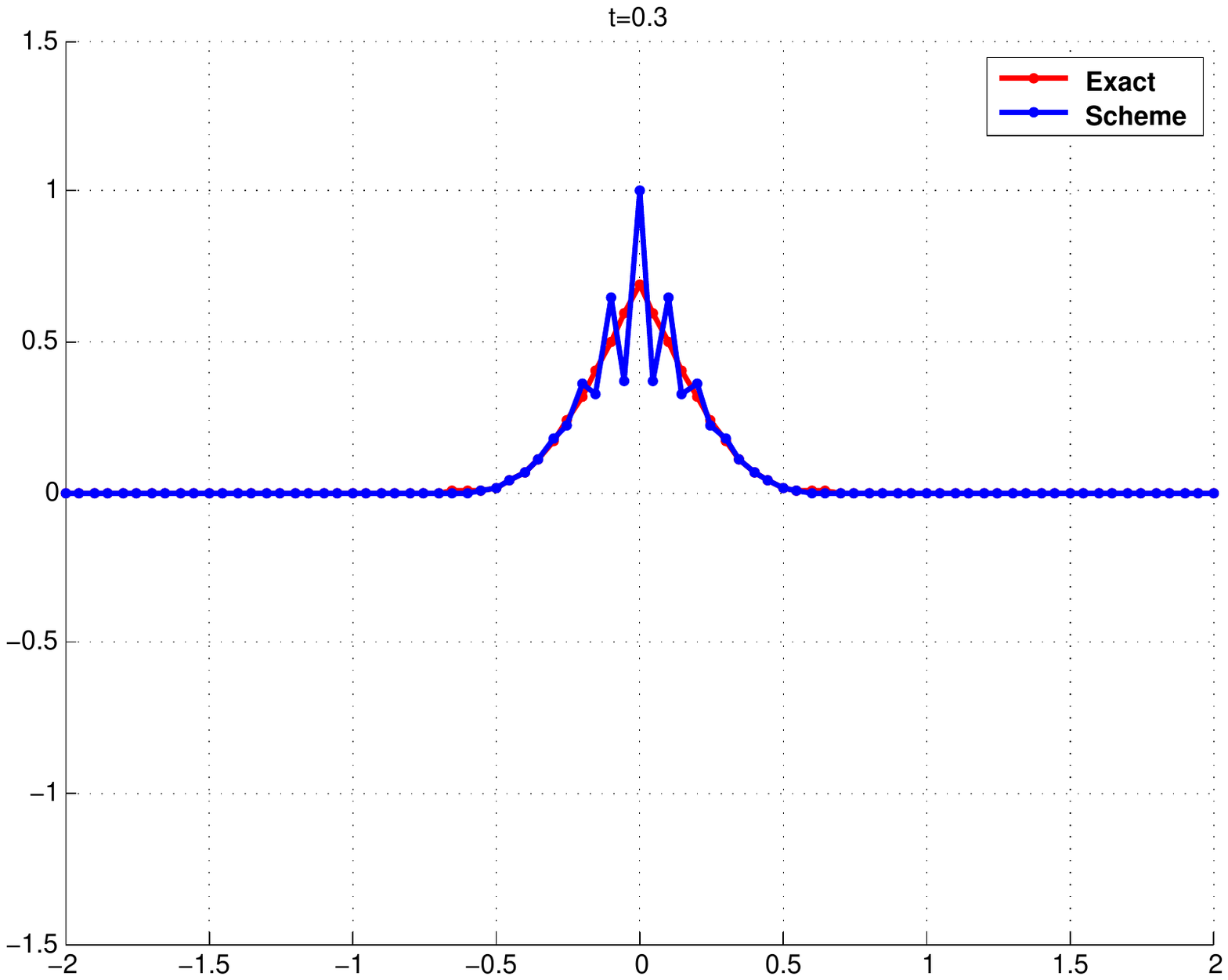}}\hspace*{-0.5cm}
{\includegraphics[width=\wfact\linewidth]{\figsnew/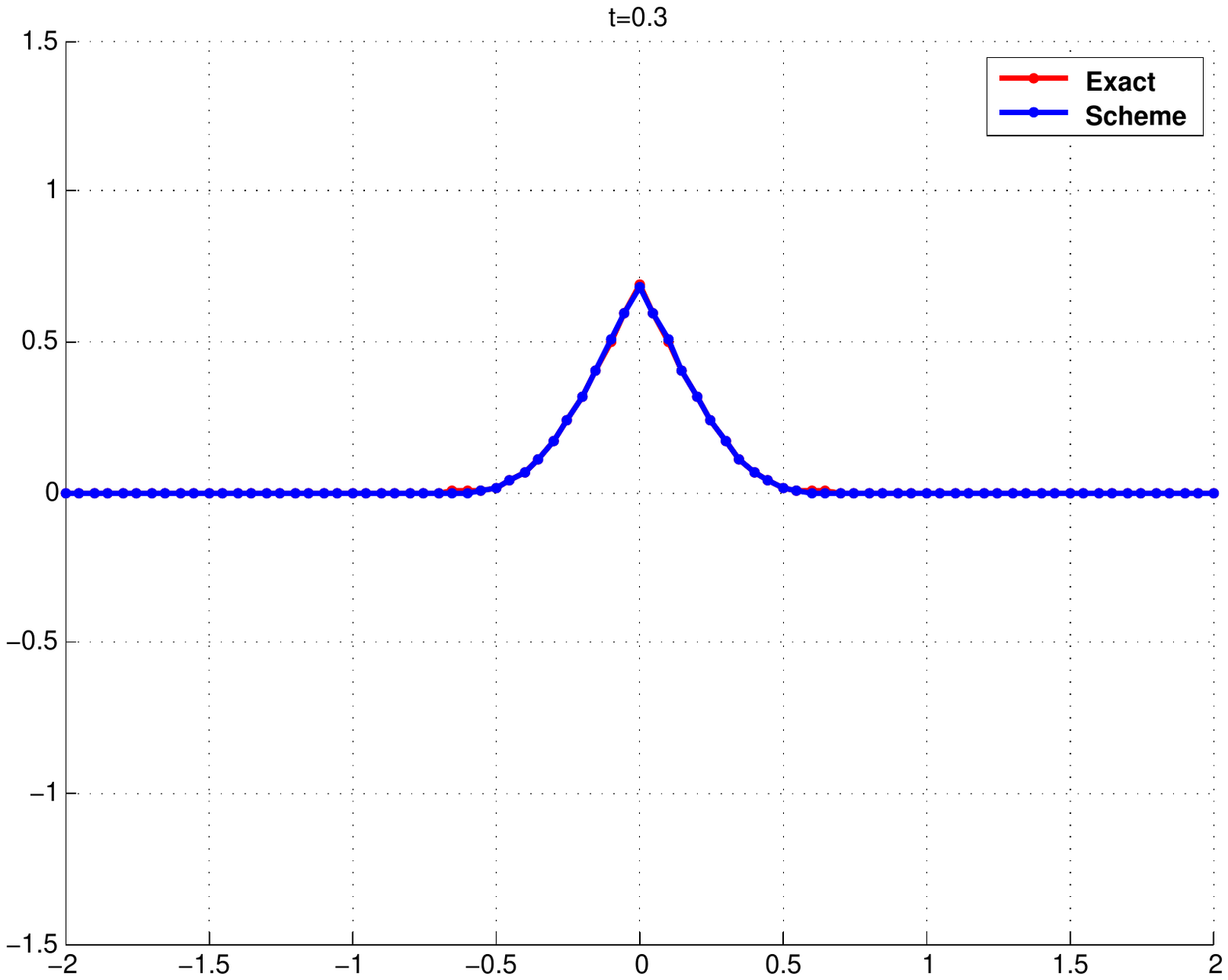}}\\[-1.5cm]
\vspace{-1cm}
\caption{(Example 1) With initial  data~\eqref{eq:v02a} (left), 
and plots at time $T=0.3$ with centered scheme - middle -  and filtered scheme - right, using $M=160$ mesh points.
\label{fig:v02a}
}
\end{figure}
\begin{figure}[!h]\hspace*{-0.5cm}
\includegraphics[width=\wfact\linewidth]{\figsnew/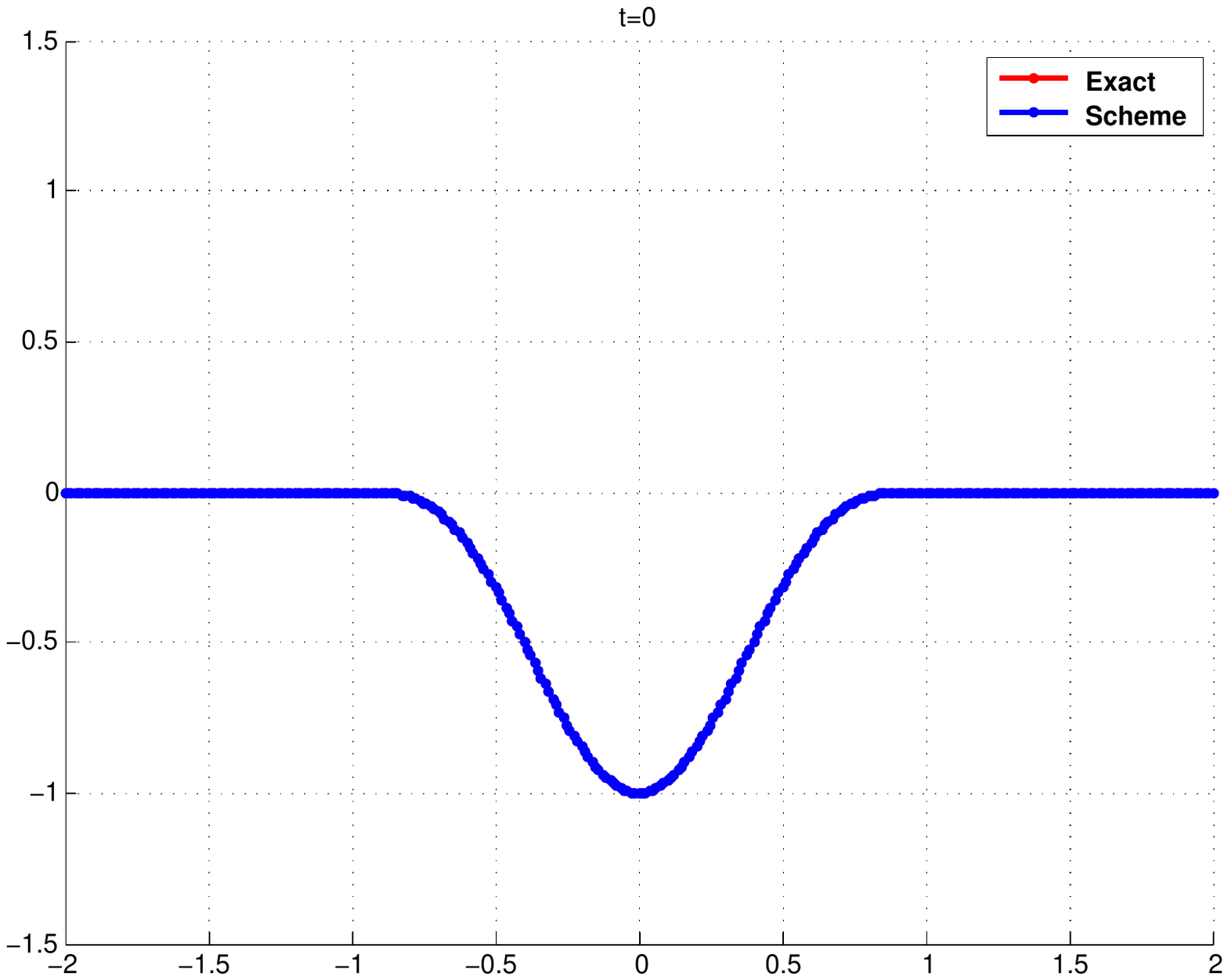}\hspace*{-0.5cm}
\includegraphics[width=\wfact\linewidth]{\figsnew/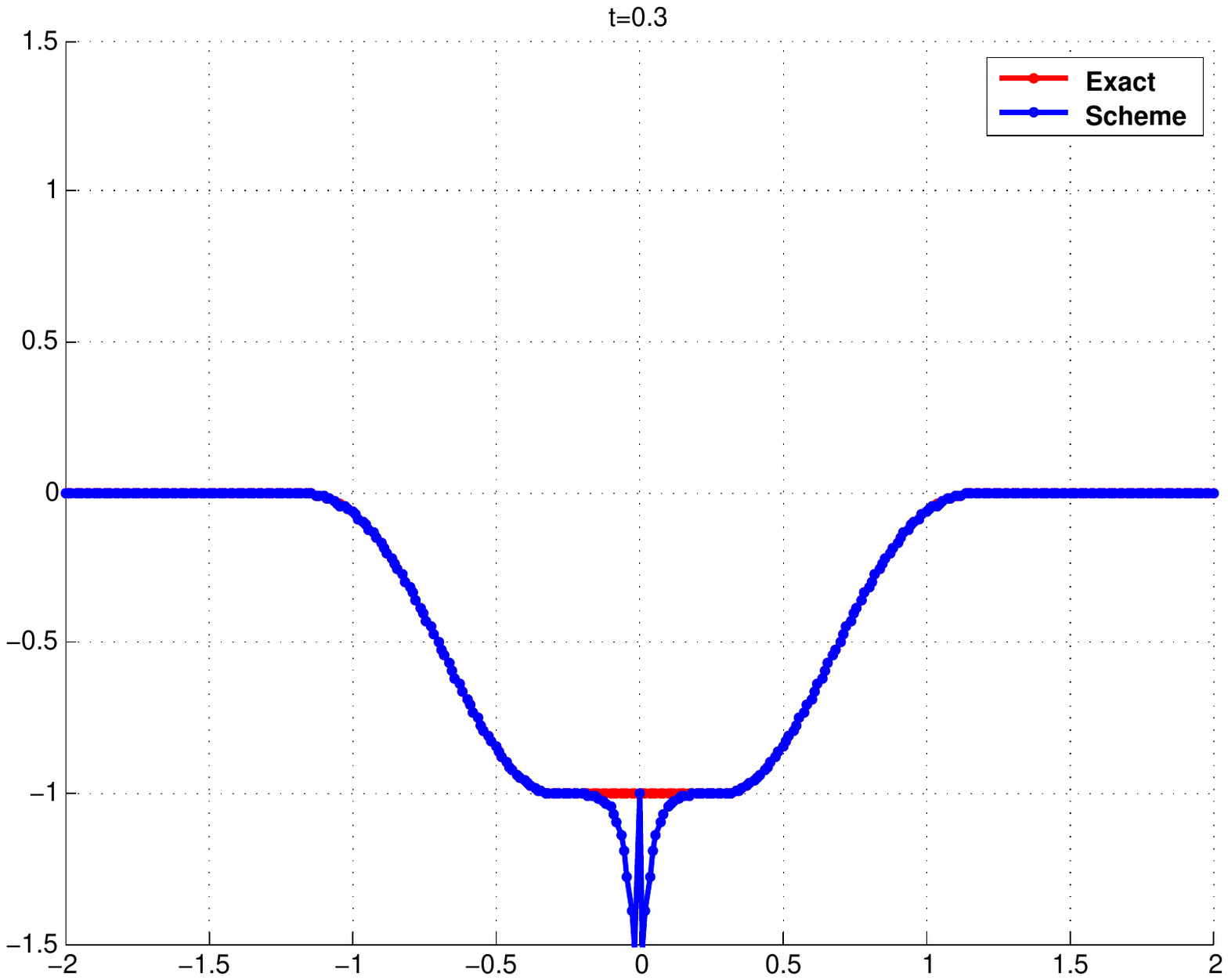}\hspace*{-0.5cm}
\includegraphics[width=\wfact\linewidth]{\figsnew/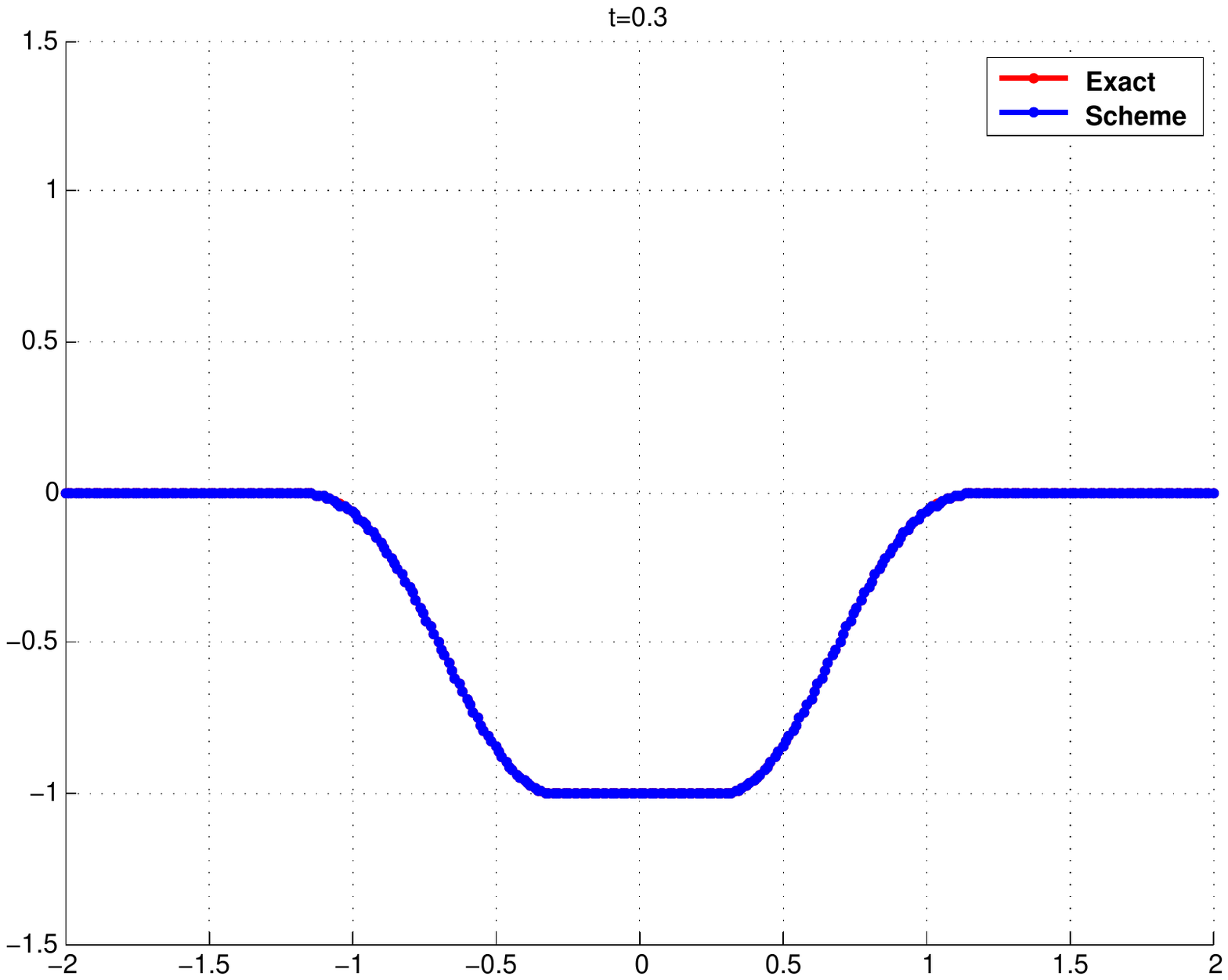}\\[-1.5cm]
\vspace{-1cm}
\caption{(Example 1) With initial  data~\eqref{eq:v02b} (left), 
and plots at time $T=0.3$ with centered scheme - middle -  and filtered scheme - right, using $M=160$ mesh points.
\label{fig:v02b}
}
\end{figure}

\medskip


\medskip\noindent
{\bf Example 2.} {\bf Burger's equation.}

In this example an HJ equivalent of the nonlinear Burger's equation is considered:
\begin{subequations}
\label{eq:ex3-1}
\be
 & & v_t + \displaystyle \ud |v_x|^2=0, \quad  t>0,\  x \in(-2,2)\\
 & &  v(0,x)=v_0(x):= \max(0,1-x^2), \quad x\in(-2,2)
\ee
\end{subequations}
with Dirichlet boundary condition on $(-2,2)$.
Exact solution is known.%
\footnote{
It holds 
$$
  v(t,x)=\frac{(\max(0,1-|\bar x|))^2}{2t} 1_{ \{t>\ud \} } + \frac{(1-2t)^2- |x|^2}{1-2t}\ 1_{ \{1\geq |x|\geq 1-2t \} }.
$$
}.
In order to test high--order convergence we have considered 
the smoother initial data which is the one obtained from \eqref{eq:ex3-1} at time 
$t_0:=0.1$,~i.e.~:
\begin{subequations}
\label{eq:ex3-2}
\be
 & & w_t+ \displaystyle \ud |w_x|^2=0, \quad  t>0,\  x \in(-2,2).\\
 & &  w(0,x):= v(t_0,x), \quad x\in(-2,2),
\ee
\end{subequations}
with exact solution $w(t,x)=v(t+t_0,x)$.

An illustration is given in Fig.~\ref{fig:burger}.
For the filtered scheme, the monotone hamiltonian used is 
$h^M(x,v^-,v^+):=\frac{1}{2}(v^-)^2\ 1_{v^->0} + \frac{1}{2}(v^+)^2\ 1_{v^+<0}$.

Errors are given in Table~\eqref{tab:ex3-1}, using  CFL=$0.37$ and terminal time $T=0.3$. 

In conclusion we observe numerically that the filtered scheme keeps the good behavior of the centered scheme
(here stable and almost second order).

\begin{figure}[!h]
\includegraphics[width=0.48\textwidth]{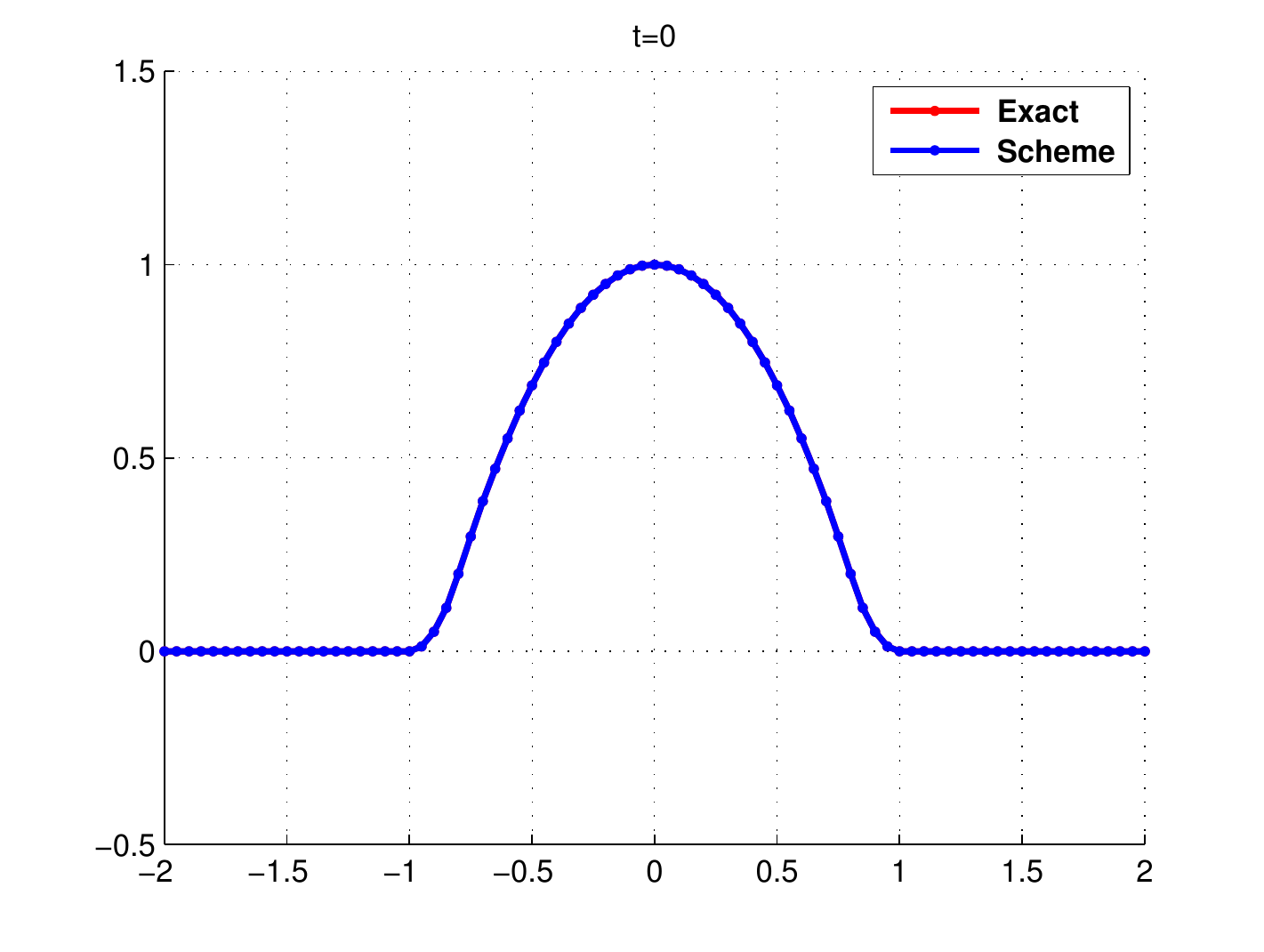} 
\includegraphics[width=0.48\textwidth]{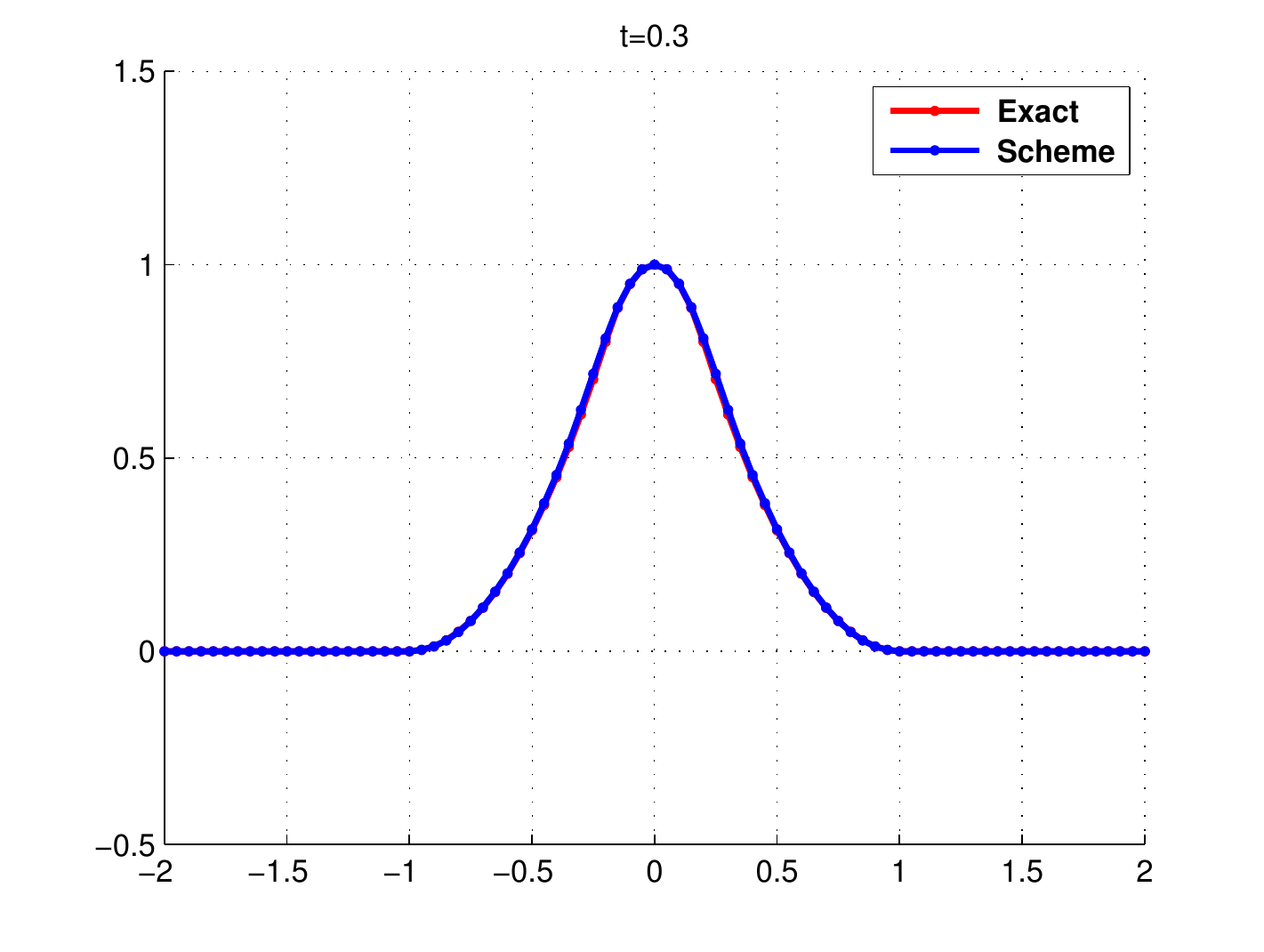}
\caption{(Example 2) Plots at $t=0$ and $t=0.3$ with the filtered scheme.
\label{fig:burger}}
\end{figure}

\begin{table}[\tablepos]
\begin{center}
\begin{tabular}{|cc|cc|cc|cc|}
\hline
  \multicolumn{2}{|c|}{ } 
   & \multicolumn{2}{|c|}{filtered ($\eps= 5\dx$)} & \multicolumn{2}{|c|}{centered} & \multicolumn{2}{|c|}{ENO2} \\
\hline
  $M$ & $N$  &  error   & order  & error    & order  &  error   & order  \\
\hline
\hline 

  40 &    9     & 2.06E-02  &   -        & 2.07E-02 &   -        & 2.55E-02 &   -  \\ 
   80 &   17   &  6.24E-03 &  1.73  & 6.24E-03 &  1.73  & 8.24E-03 &  1.63  \\ 
  160 &   33  & 1.85E-03  &  1.76  & 1.85E-03 &  1.76  & 2.81E-03 &  1.55  \\ 
  320 &   65  & 5.51E-04  &  1.74  & 5.51E-04 &  1.74  & 1.03E-03 &  1.45   \\ 
  640 &  130 & 1.68E-04  &  1.71  & 1.68E-04 &  1.71  & 3.74E-04 &  1.47  \\ 
\hline
\end{tabular}
\caption{(Example 2) $L^2$ errors for filtered scheme, centered scheme, and ENO second order scheme.
\label{tab:ex3-1}
}
\end{center}
\end{table}

\medskip\noindent
{\bf Example 3.} {\bf 2D rotation.}
We now apply filtered scheme to an advection equation in two dimensions:
\be
  & & v_t - y v_x + x v_y=0,\quad (x,y)\in \Omega,\ t>0,\\
  & & v(0,x,y)=v_0(x,y):= 0.5- 0.5\, \max(0,\frac{1-(x-1)^2-y^2}{1- r_0^2})^4
\ee
where $\Omega:=(-A,A)^2$ (with $A=2.5$),
$r_0=0.5$ and with Dirichlet boundary condition $v(t,x)=0.5$, $x\in\partial\Omega$.
This initial condition is regular and such that the level set $v_0(x,y)=0$
corresponds to a circle centered at $(1,0)$ and of radius $r_0$.

In this example the monotone numerical Hamiltonian is defined by
\be\label{eq:FD2}
  h^{M}(u^-_x,u_x^+,u_y^-,u_y^+)
    &:= & \max(0,f_1(a,x,y))u_x^- + \min(0,f_1(a,x,y))u_x^+    \\
    &  &  +\ \max(0,f_2(a,x,y))u_y^-+ \min(0,f_2(a,x,y))u_y^+ \nonumber
\ee
and the high--order scheme is the centered finite difference scheme in both spacial variables, and RK2 in time. 
The filtered scheme is otherwise the same as \eqref{eq:FS}. However it is necessary to use a greater constant $c_1$
is the choice $\eps=c_1\dx$ in order to get (global) second order errors.
We have used here $\eps=20\dx$. 

On the other hand the CFL condition is
\be\label{eq:CFL2}
  \mu:=c_0(\frac{\dt}{\dx}+\frac{\dt}{\dy} )\leq 1,
\ee
where here $c_0=2.5$ (an upper bound for the dynamics in the considered domain $\Omega$).
In this test the CFL number is $\mu:=0.37$. 

Results are shown in Table~\ref{tab:ex3-1-2D}
for terminal time time $T:=\pi/2$.
Although the centered scheme is a priori unstable, in this example it is numerically stable and of second order.
So we observe that the filtered scheme keep this good behavior and is also 
or second order (ENO scheme gives comparable results here).

\begin{table}[\tablepos]
\begin{center}
\begin{tabular}{|cc|cc|cc|cc|}
\hline
   &   & \multicolumn{2}{|c|}{filtered} & \multicolumn{2}{|c|}{centered} & \multicolumn{2}{|c|}{ENO} \\
\hline
  $Mx$ & $Ny$  &  $L^2$ error   & order  & $L^2$ error    & order  &  $L^2$ error   & order  \\
\hline
\hline 
   20 &   20  & 5.05E-01 &   -   & 5.05E-01 &   -   & 6.99E-01 &   -    \\
   40 &   40  & 1.48E-01 &  1.77 & 1.48E-01 &  1.77 & 4.66E-01 &  0.58  \\
   80 &   80  & 3.77E-02 &  1.98 & 3.77E-02 &  1.98 & 2.04E-01 &  1.19  \\ 
  160 &  160  & 9.40E-03 &  2.00 & 9.40E-03 &  2.00 & 5.50E-02 &  1.89  \\
  320 &  320  & 2.34E-03 &  2.01 & 2.34E-03 &  2.01 & 1.29E-02 &  2.10  \\
\hline
\end{tabular}
\caption{(Example 3) Global $L^2$ errors for the filtered scheme, centered and second order ENO schemes
(with CFL $0.37$).
\label{tab:ex3-1-2D}
}
\end{center}
\end{table}


\if{
\begin{figure}[!h]
\includegraphics[width=0.6\textwidth]{figs/advection}
\includegraphics[width=0.6\textwidth]{figs/advectionfs}\\
Initial  data (left), and plots at time $T=\pi/2$, by filtered scheme  ($M=40$ mesh points).
\end{figure}
}\fi

\begin{figure}[!h]
\includegraphics[width=0.45\textwidth]{\figsnew/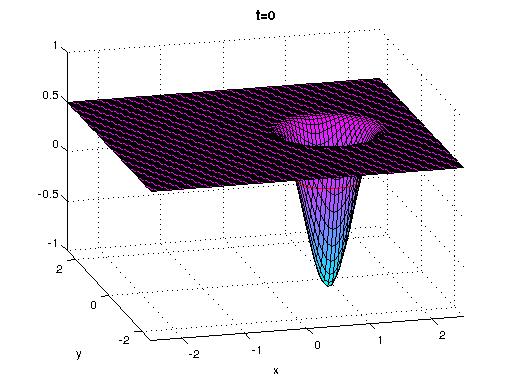}  
\includegraphics[width=0.45\textwidth]{\figsnew/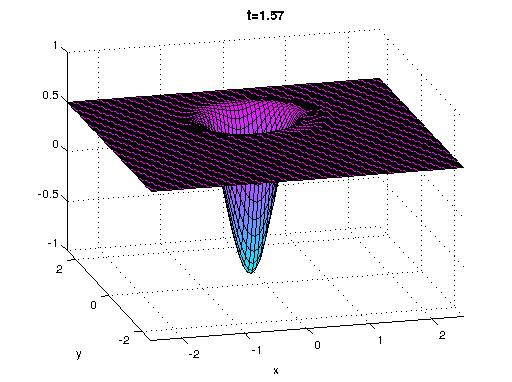}
\caption{\label{fig:ex3}
(Example 3)
Filtered scheme, plots at time  $t=0$ (left) and $t=\pi/2$ (rigth) with $M=80$ mesh points.
}
\end{figure}

\medskip\noindent
{\bf Example 4.} {\bf{Eikonal equation.}} 
In this example we consider the eikonal equation
\be\label{eq:eikonal2d}
  & & v_t+ |\nabla v|=0,\quad (x,y)\in \Omega,\ t>0
\ee
in the domain $\Omega:=(-3,3)^2$. 
The initial data is defined by 
{\small
\be \label{eq:eikonal-twoholes}
  & & \hspace{-1cm} v_0(x,y) = \\
  & & 0.5-0.5\, \max\bigg( \max(0,\frac{1-(x-1)^2-y^2}{1- r_0^2})^4,\ \max(0,\frac{1-(x+1)^2-y^2}{1- r_0^2})^4\bigg).
  \nonumber 
\ee
}
The zero-level set of $v_0$ corresponds to two separates circles or radius $r_0$ and 
centered in $A=(1,0)$ and $B=(-1,0)$ respectively.
Dirchlet boundary conditions are used as the previous example.

\if{
We first consider the following smooth initial data 
\be \label{eq:eikonal-onehole}
  & & v(0,x,y)=v_0(x,y)=0.5 -0.5\, \max(0,\frac{1-x^2-y^2}{1- r_0^2})^4,
\ee
with $r_0=0.5$, with Drichlet boundary conditions as in the previous example.

Numerical results are given in Table~\ref{tab:ex41} were is compared the global $L^2$ errors for the
filtered scheme (with $\eps=20\ex$), the centered scheme, and  a second order ENO scheme.
}\fi

The monotone hamiltonian $h^M$ used in the filtered scheme is in Lax-Friedrichs form:
\be\label{eq:LF2}
  h^{M}(x,u^-_1,u_1^+,u_2^-,u_2^+)
    & = & H(x,\frac{u^-_1+u_1^+}{2},\frac{u_2^-+u_2^+}{2}) \nonumber \\
    &   &  \quad  - \frac{C_x}{2}(u_1^+ - u_1^-) - \frac{C_y}{2}(u_2^+ - u_2^-),
\ee 
where, here, $C_x=C_y=1$.
We used the CFL condition $\mu=0.37$ as in \eqref{eq:CFL2}.
Also, the simple limiter~\eqref{eq:2dlimiter} was used for the filtered scheme as described in Section~\ref{sec:limiter}.
It is needed in order to get a good second order behavior at extrema of the numerical solution.
The filter coefficient is set to $\eps=20\dx$ as in the previous example.

\if{
\begin{table}
\begin{center}
\begin{tabular}{|cc|cc|cc|cc|}
\hline
   &   & \multicolumn{2}{|c|}{filtered ($\eps=20\dx$)} & \multicolumn{2}{|c|}{centered} & \multicolumn{2}{|c|}{ENO2} \\
\hline
  $Mx$ & $Ny$  &  $L^2$ error   & order  & $L^2$ error    & order  &  $L^2$ error   & order  \\
\hline
\hline 
   25 &   25  & 5.39E-01 &   -   & 3.73E-01 &   -    & 4.22E-01 &   -    \\
   50 &   50  & 1.82E-01 &  1.57 & 1.42E-01 &  1.39  & 1.57E-01 &  1.42  \\
  100 &  100  & 3.72E-02 &  2.29 & 4.72E-02 &  1.59  & 5.12E-02 &  1.62  \\
  200 &  200  & 9.36E-03 &  1.99 & 1.66E-02 &  1.51  & 1.48E-02 &  1.80  \\
  400 &  400  & 2.36E-03 &  1.99 & 7.23E-03 &  1.20  & 4.34E-03 &  1.77  \\
\hline
\end{tabular}
\caption{(Example 4, with initial data \eqref{eq:eikonal-onehole}) 
Global $L^2$ errors for filtered scheme, centered and second order ENO schemes, 
at time $t=0.6$.
\label{tab:ex41}
}
\end{center}
\end{table}

\begin{figure}
\includegraphics[width=0.5\textwidth]{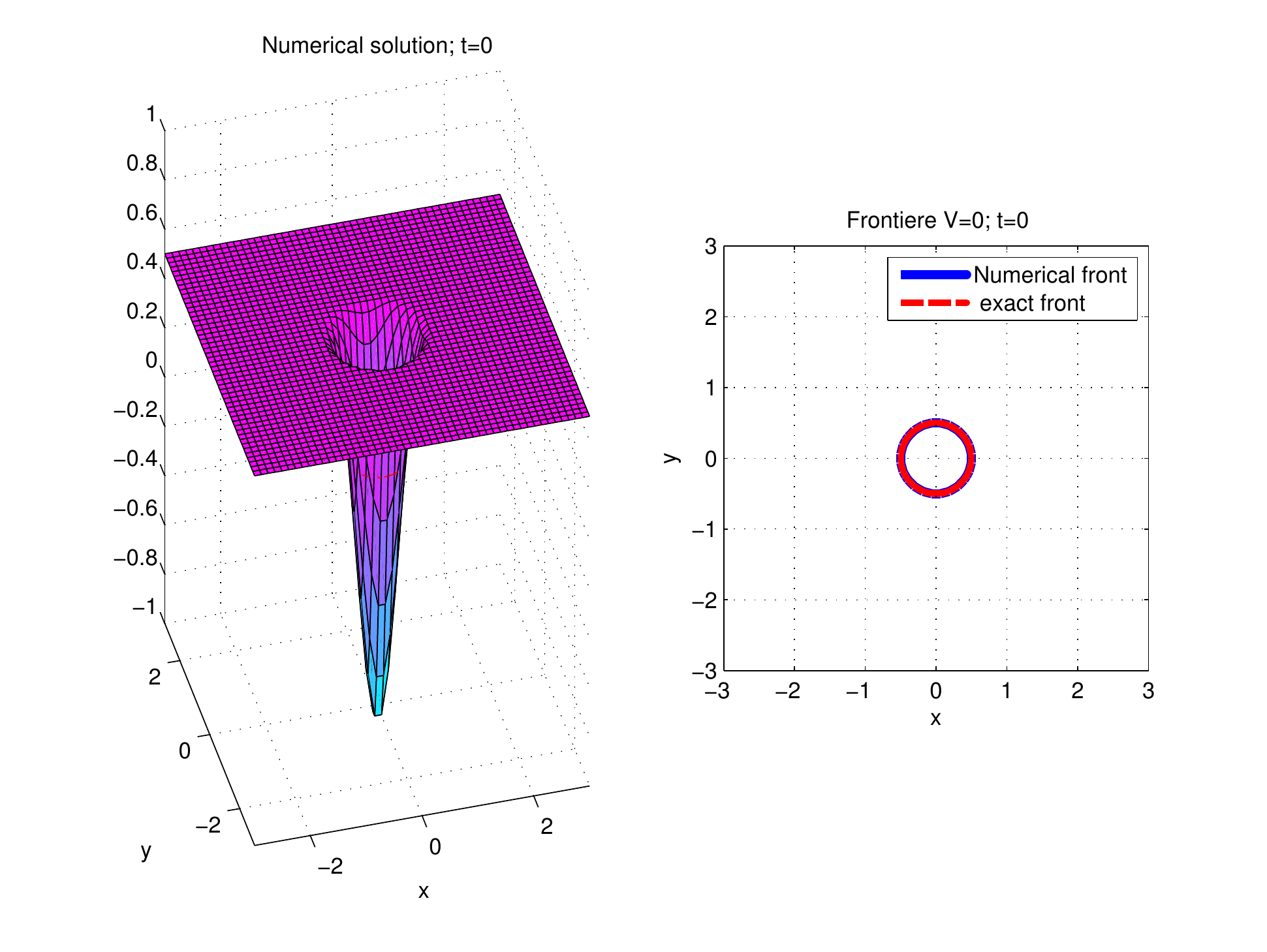}\\
\includegraphics[width=0.5\textwidth]{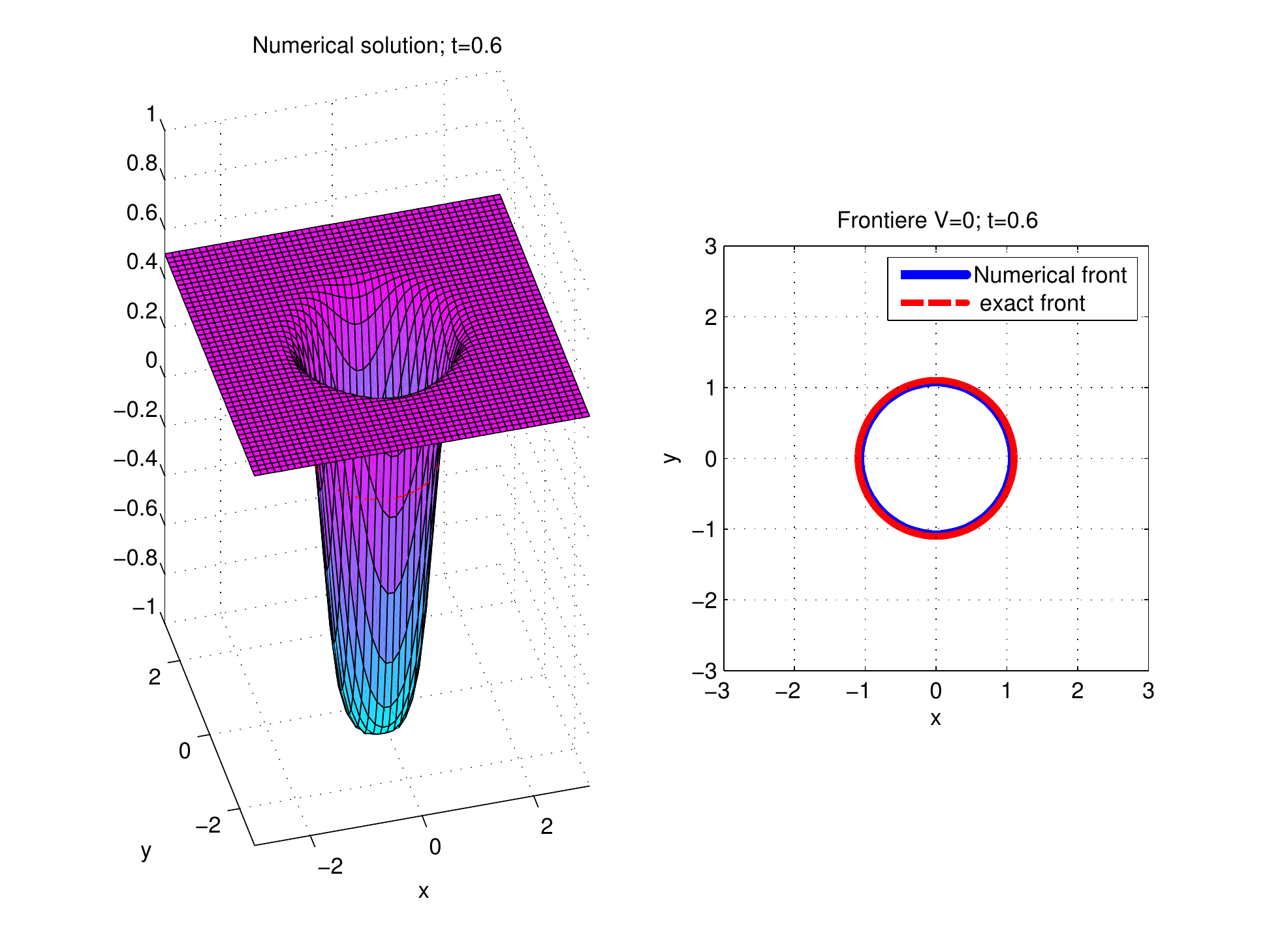}\\
Initial  data (Top), and plots at time $T=\pi/2$, by filtered scheme ($M=50$ mesh points).

\end{figure}
}\fi

\if{
We now consider an other initial data corresponding to two separates holes centered in $A=(1,0)$ and $B=(-1,0)$
respectively:
{\small
\be \label{eq:eikonal-twoholes}
  & & \hspace{-1cm} v(0,x,y) = \\
  & & 0.5-0.5\, \max\bigg( \max(0,\frac{1-(x-1)^2-y^2}{1- r_0^2})^4,\ \max(0,\frac{1-(x+1)^2-y^2}{1- r_0^2})^4\bigg).
  \nonumber 
\ee
}
}\fi

Numerical results are given in Table~\ref{tab:ex42}, showing the global $L^2$ errors for the
filtered scheme, the centered scheme, and a second order ENO scheme, at time $t=0.6$.
We observe that the centered scheme has some unstabilities for fine mesh, while the filtered performs as 
expected.

\begin{table}[!hbtp]
\begin{center}
\begin{tabular}{|cc|cc|cc|cc|}
\hline
   &   & \multicolumn{2}{|c|}{filtered ($\eps=20\dx$)} & \multicolumn{2}{|c|}{centered} & \multicolumn{2}{|c|}{ENO2} \\
\hline
  $Mx$ & $Ny$  &  $L^2$ error   & order  & $L^2$ error    & order  &  $L^2$ error   & order  \\
\hline
\hline
   25 &   25  & 5.39E-01 &   -   & 6.00E-01 &   -   & 5.84E-01 &   -   \\
   50 &   50  & 1.82E-01 &  1.57 & 2.25E-01 &  1.41 & 2.11E-01 &  1.47 \\
  100 &  100  & 3.72E-02 &  2.29 & 8.46E-02 &  1.41 & 6.88E-02 &  1.62 \\
  200 &  200  & 9.36E-03 &  1.99 & 3.53E-02 &  1.26 & 2.02E-02 &  1.76 \\
  400 &  400  & 2.36E-03 &  1.99 & 1.36E-01 & -1.95 & 5.98E-03 &  1.76 \\
\hline
\end{tabular}
\caption{(Example 4)
Global $L^2$ errors for filtered scheme, centered and  second order ENO schemes.
\label{tab:ex42}
}
\end{center}
\end{table}

\begin{figure}[!h]
\begin{center}
\begin{tabular}{cc}
\includegraphics[width=0.4\textwidth]{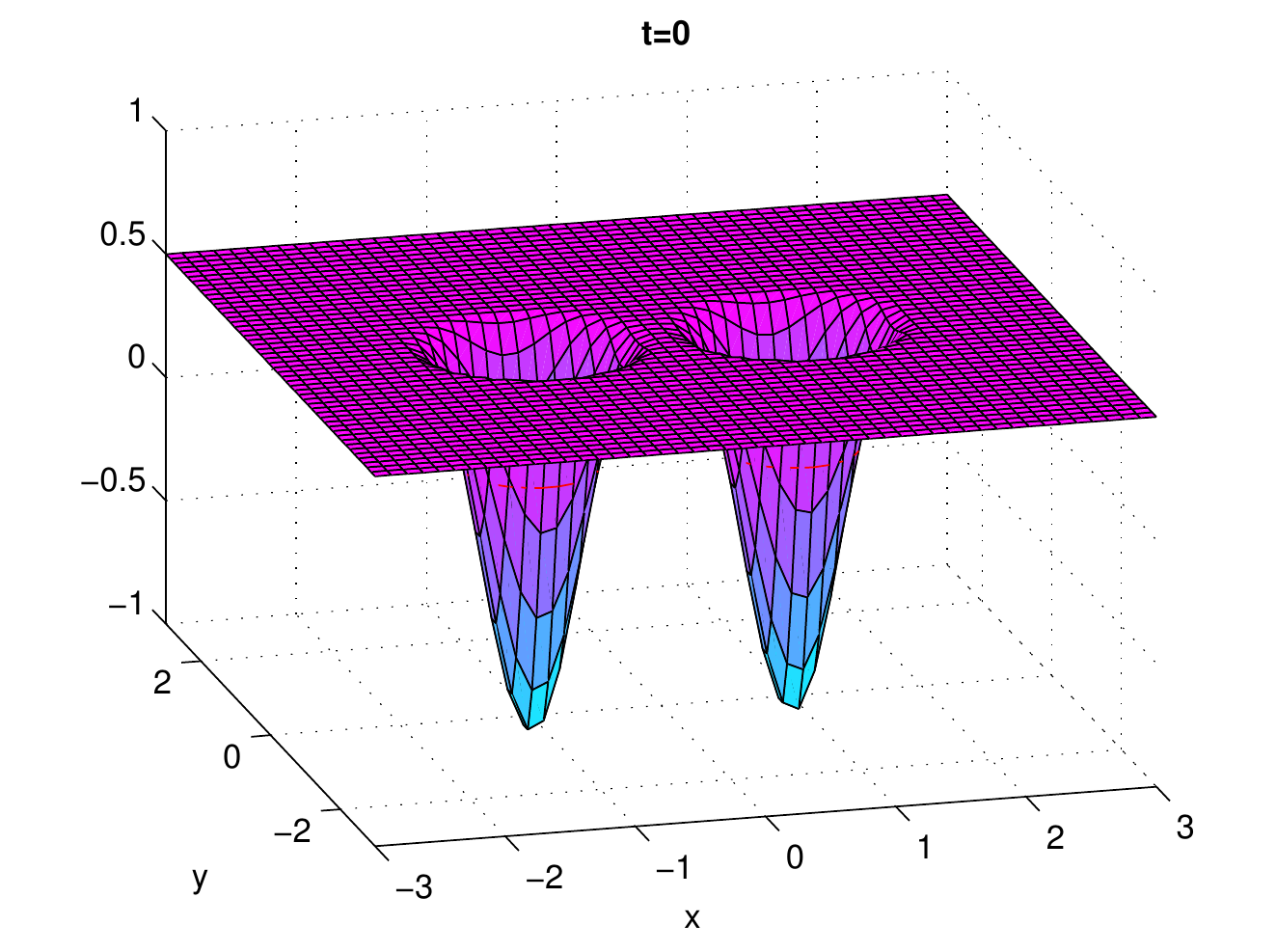} &
\includegraphics[width=0.4\textwidth]{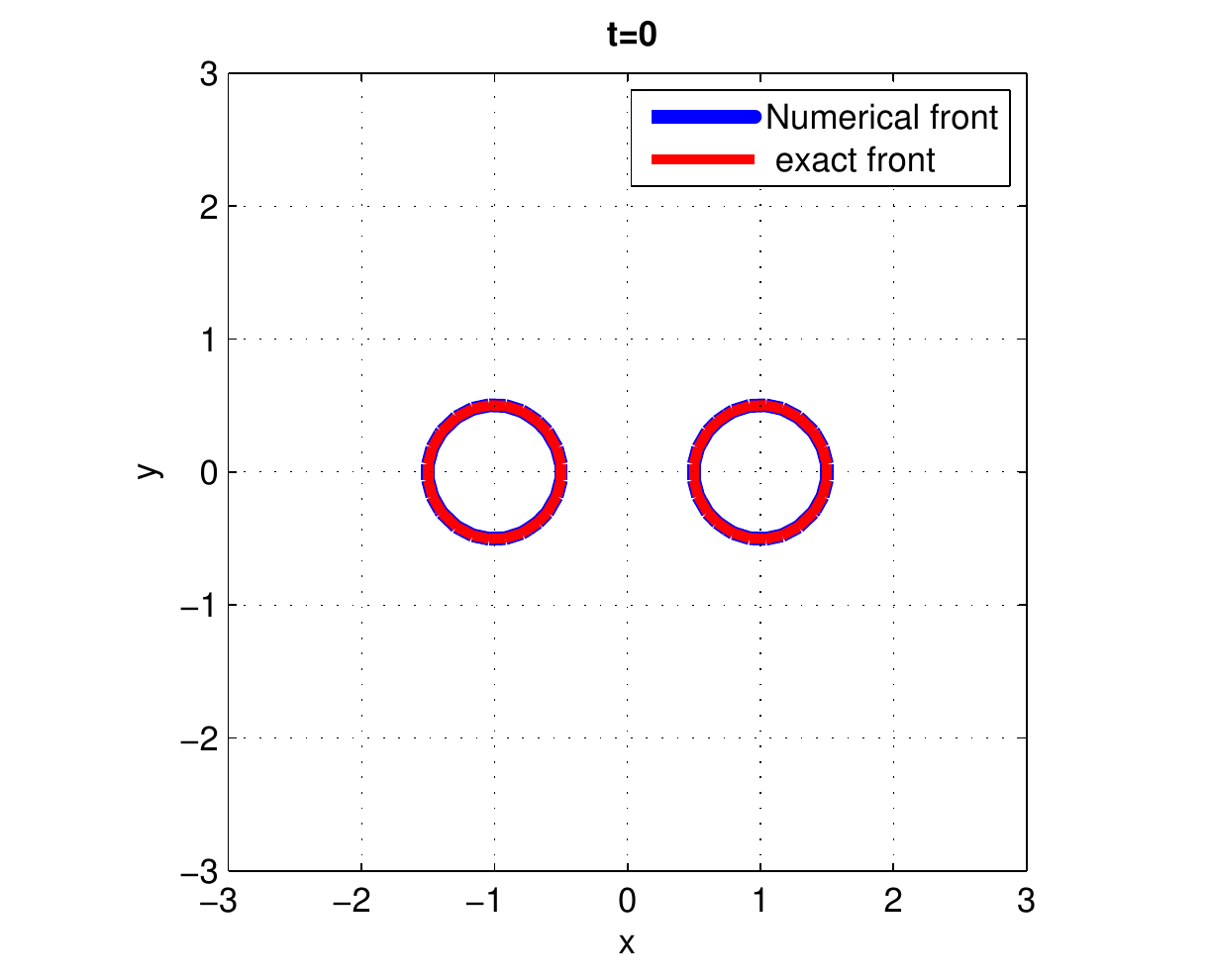}  \\
\includegraphics[width=0.4\textwidth]{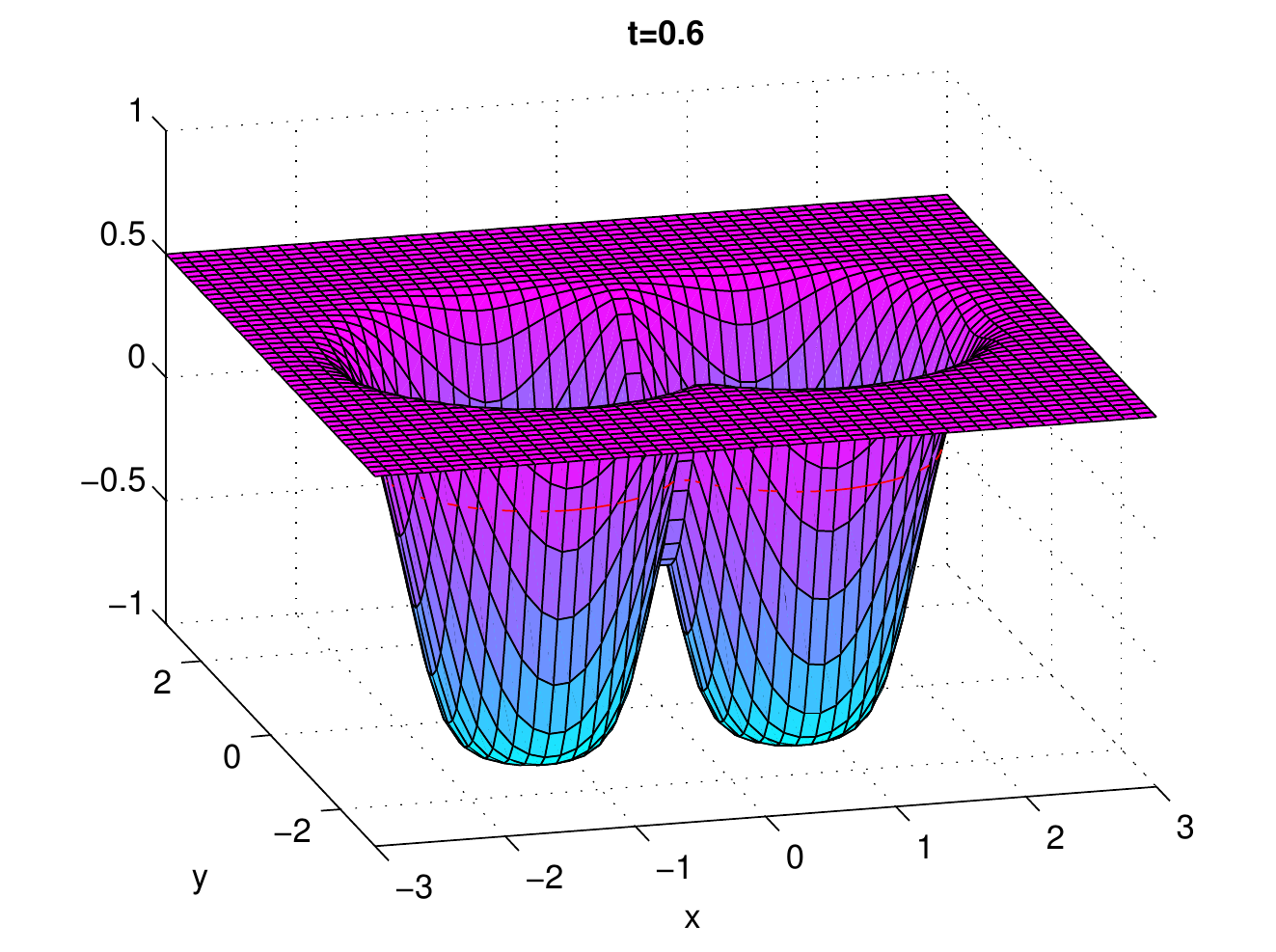} &
\includegraphics[width=0.4\textwidth]{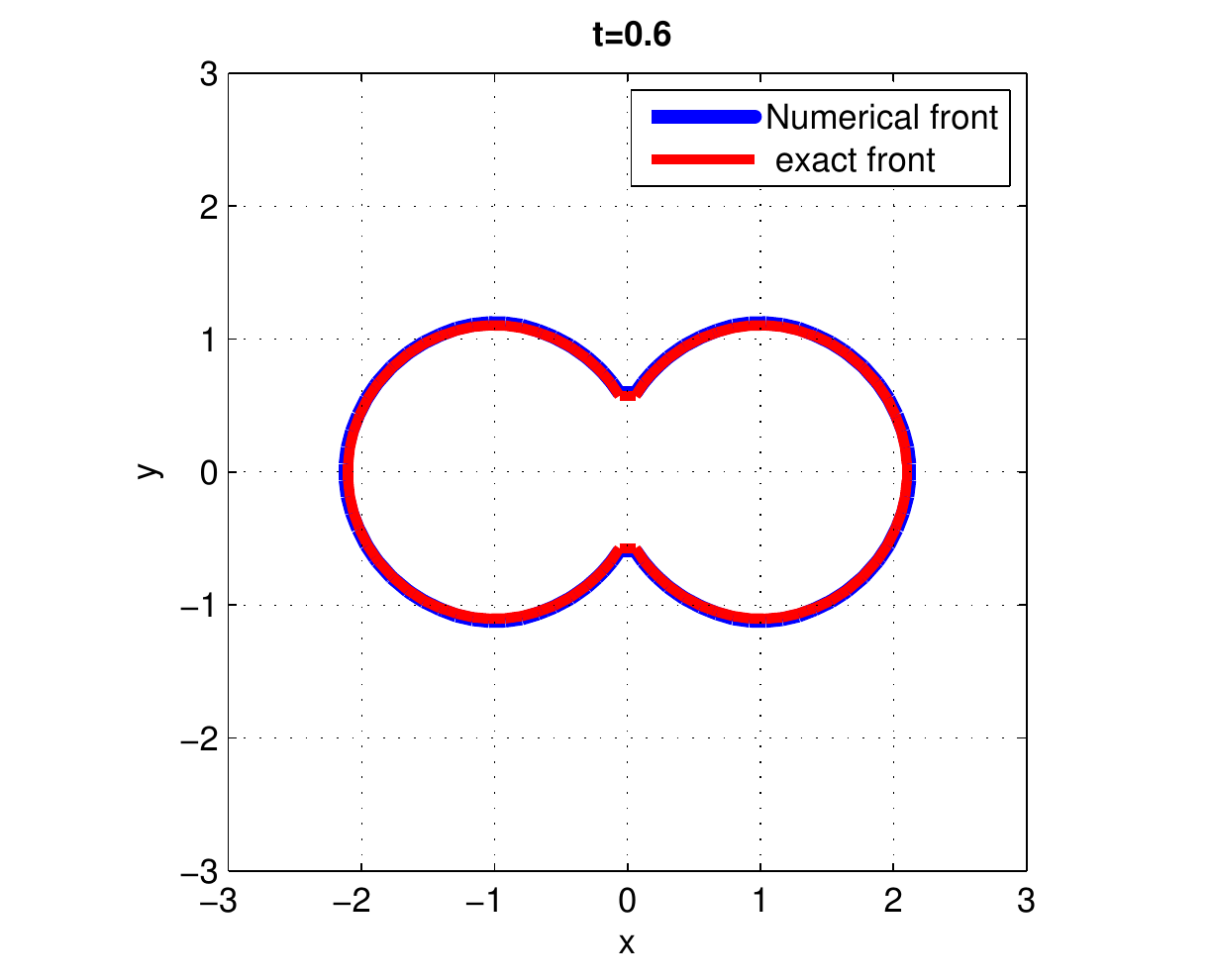}  
\end{tabular}
\end{center}
\caption{\label{fig:ex42}
(Example 4)
Plots at times $t=0$ (top) and $t=\pi/2$ (bottom)
for the filtered scheme with $M=50$ mesh points. The figures to the right represent the $0$-level sets.
}
\end{figure}

\if{
\medskip\noindent
{\bf Example 5.} In this example the considered HJ equation is 
\be
  v_t-yv_x+xv_y+ \|\nabla v\|=0, \quad (x,y)\in \Omega,\ t>0,
\ee
with $\Omega=(-3,3)^2$, and with the following initial data:
\be
   v_0(x,y)=\min(0.5,\|x-A\|_{2}-0.5,\|x- B\|_{2}-0.5),
\ee
where $\|.\|$ is Euclidean norm, $A=(1,0)$ and $B=(-1,0)$
(together with Dirichlet boundary condition $v(t,x,y)=0.5$ for $(x,y)\in\partial \Omega$).

Again we compare the filtered scheme 
(with $\eps=5\dx$) with the centered (a priori unstable) scheme and the second order ENO scheme. 

Numerical results are shown in Table~\ref{tab:ex5},
for terminal time $T=0.75$ and CFL $0.37$.
Local errors has been computed in the region $|v(t,x,y)| \leq 0.1$ and also away from the singular 
line $x+y=0$ (i.e., for points such that furthermore  $|\frac{x+y}{\sqrt{2}}|\geq 0.1$). 
In this example, the naive centered scheme is unstable (as expected), while the filtered scheme 
is stable and of second order.

\begin{table}[H]
\begin{center}
\begin{tabular}{|cc|cc|cc|cc|}
\hline
   &   & \multicolumn{2}{|c|}{filtered $\eps=5\dx$} & \multicolumn{2}{|c|}{centered} & \multicolumn{2}{|c|}{ENO2} \\
\hline
  $Mx$ & $Nx$  &  $L^2$ error   & order  & $L^2$ error    & order  &  $L^2$ error   & order  \\
\hline
\hline
%
%
%
  25 &   25 & 1.02E-01 &  0.00  & 1.11E-01 &  0.00  & 1.14E-01 &  0.00 \\ 
  50 &   50 & 1.78E-02 &  2.51  & 1.99E-02 &  2.48  & 2.12E-02 &  2.43 \\ 
 100 &  100 & 6.06E-03 &  1.56  & 2.04E-02 & -0.03  & 3.67E-03 &  2.53 \\ 
 200 &  200 & 1.13E-03 &  2.43  & 1.27E-02 &  0.69  & 8.61E-04 &  2.09 \\ 
 400 &  400 & 2.86E-04 &  1.98  & 1.13E-02 &  0.17  & 2.12E-04 &  2.02 \\ 
\hline
\end{tabular}
\caption{(Example 5) Local errors of filtered, centered and ENO scheme.
\label{tab:ex5}
}
\end{center}
\end{table}

\begin{figure}[!h]
\includegraphics[width=0.4\textwidth]{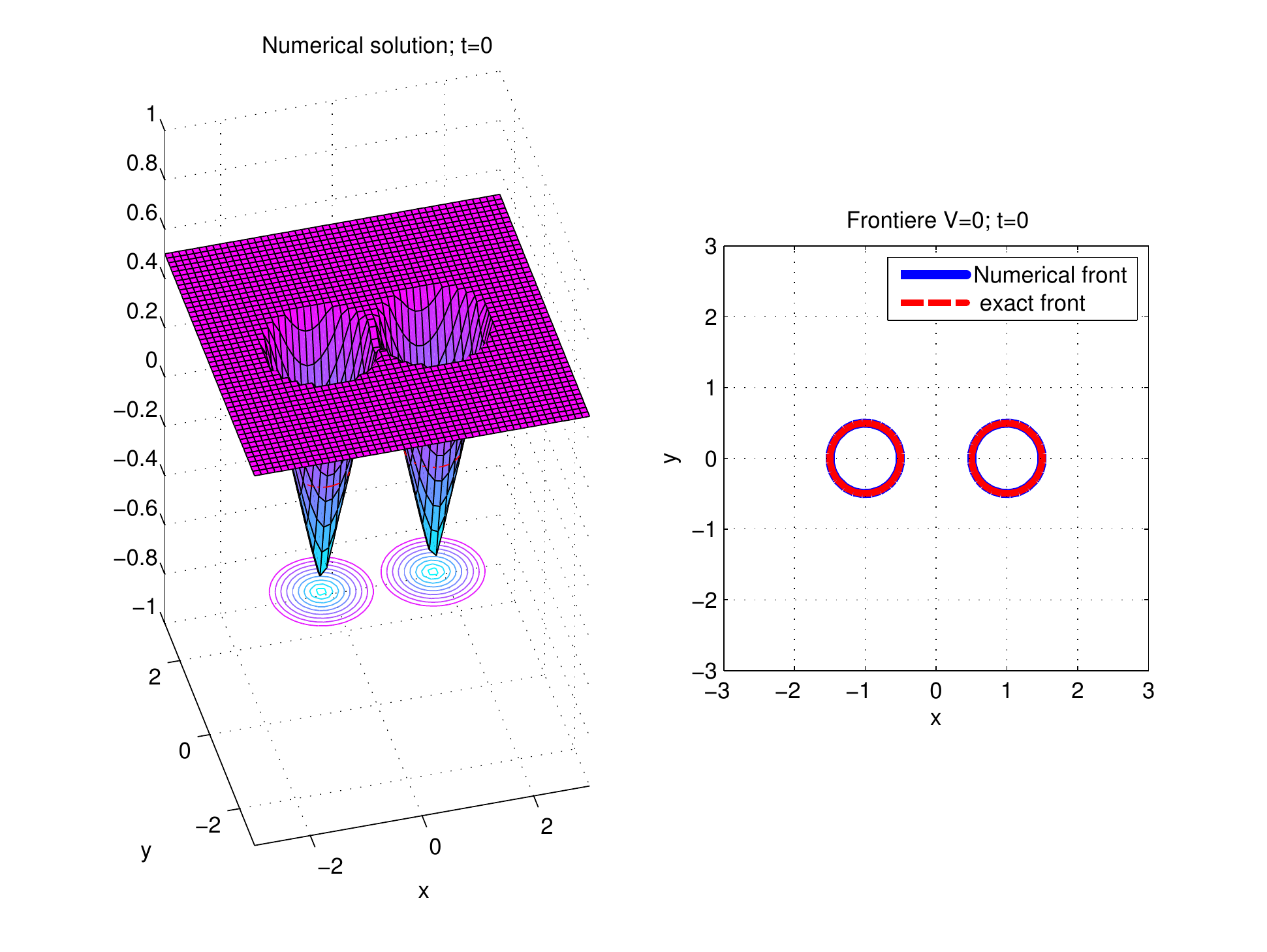}\\
\includegraphics[width=0.4\textwidth]{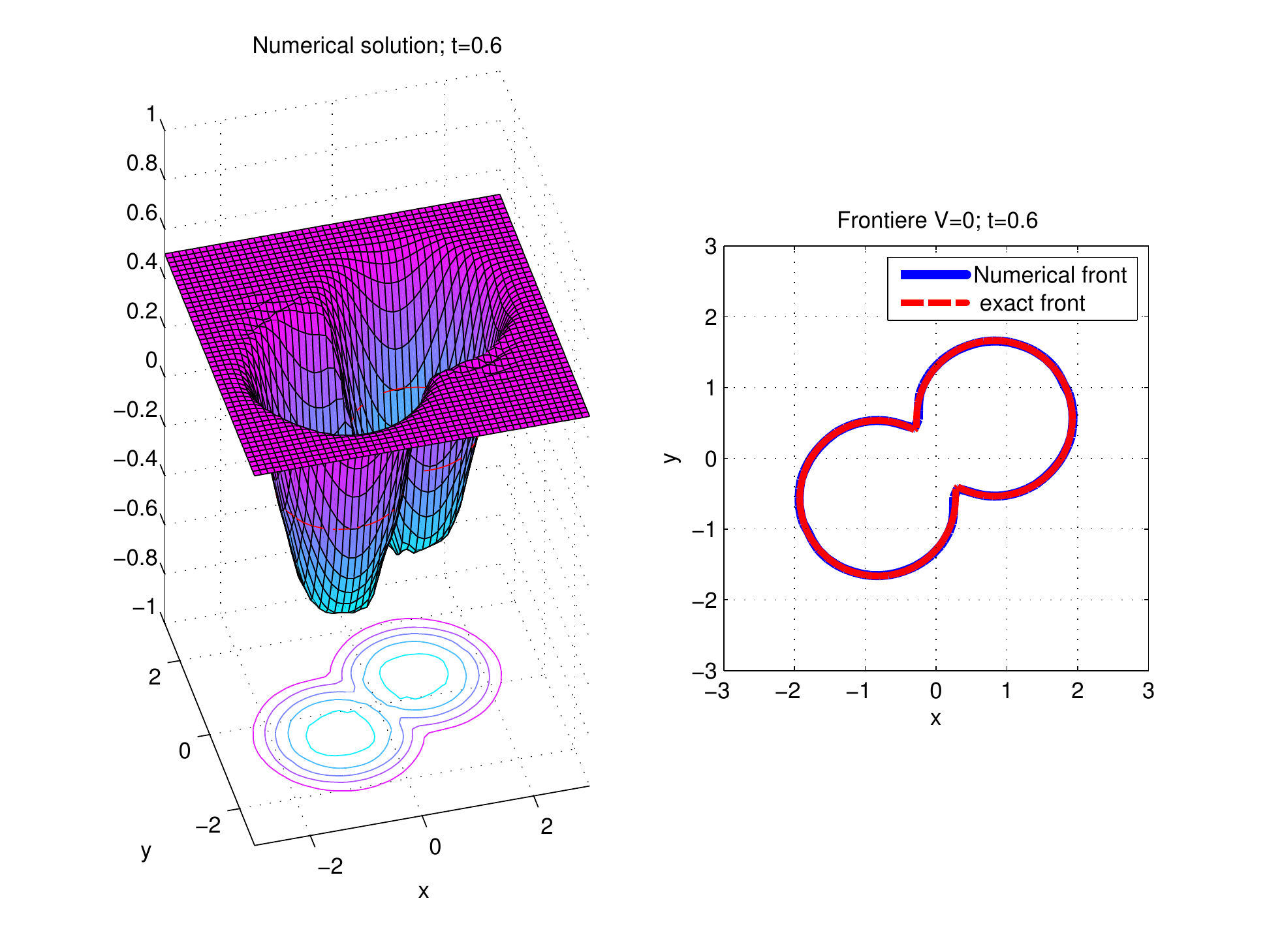}\\
Plots at time $t=0$ (top) and $t=0.75$ (bottom) for the filtered scheme and $M=50$ mesh points.
The figures to the right represent the zero-level sets.
\end{figure}
}\fi

\newcommand{\Examplesix}{Example 5}
\newcommand{\Exampleseven}{Example 6}
\newcommand{\Exampleheight}{Example 7}

\medskip\noindent

{\bf \Examplesix\  Steady eikonal equation.}
We consider a steady eikonal equation with Dirichlet boundary condition, which is taken from 
Abgrall~\cite{A09}:
\begin{subequations} \label{eq:A}
\be
  & & |v_x|=f(x)\quad \ x\in (0,1),\\
  & & v(0)=v(1)=0,
\ee
\end{subequations} 
where
$f(x)=3x^2+a$, with  $a=\frac{1-2x_0^3}{2x_0-1}$ and $x_0=\frac{\sqrt[3]{2}+2}{4\sqrt[3]{2}}$. 
Exact solution is known:
\be
v(x) := \left\{
  \begin{array}{l l}
    x^3+ax & \quad x\in[0,x_0], \\
    1+a-ax-x^3 & \quad  x\in[x_0,1].
  \end{array} \right.
\ee
The steady solution for \eqref{eq:A} can be considered as the limit $\disp\lim_{t\rightarrow \infty} v(t,x)$ 
where $v$ is the solution of the time marching corresponding form:
\begin{subequations} \label{eq:A1}
\be
  & & v_t+|v_x|=f(x)\quad \ x\in (0,1), \ t>0, \\
  & & v(t,0)=v(t,1)=0, \quad t>0.
\ee
\end{subequations} 
In this example the upwind monotone scheme is used:
\beno
  h^M(.)_j:=\frac{u_j^{n+1}-u^n_j}{\dt} - \max\{ \frac{u_j^n -u^n_{j-1}}{\dx} , \frac{u_j^n -u^n_{j+1}}{\dx}\} 
    - \dt f(x_j),
\eeno
the high--order scheme will be the centered scheme,
and the filtered scheme \eqref{eq:FS} will be used with $\eps=5\dx$.
The iterations are stopped when the difference between too successive time step is small enough or a fixed number of iterations is passed, i.e., in this example,
\be
  \|u^{n+1} - u^n\|_{L^\infty}:=\max_i |u^{n+1}_{i} -u^n_{i}| \leq 10^{-6}  \quad \mbox{or} \quad n\geq N_{max}:=5000.
\ee
As analyzed in~\cite{bok-fal-fer-gru-kal-zid} for $\eps$-monotone schemes, for a given mesh step, 
even if the iterations may not converge (because of the non monotony of the scheme), 
it can be shown to be close to a fixed
point after enough iterations.

\if{
\begin{table}[!hbtp]
\begin{center}
\begin{tabular}{|c|cc|cc|cc|}
\hline
   & \multicolumn{2}{|c|}{filtered ($\eps= 5\dx$)} & \multicolumn{2}{|c|}{centered} & \multicolumn{2}{|c|}{ENO2} \\
\hline
  $M$ &  error   & order  & error    & order  &  error   & order  \\
\hline
\hline 
\hline
\end{tabular}
\caption{(\Examplesix) Global errors of filtered scheme with RK2 in time.
\label{tab:ex6}
}
\end{center}
\end{table}
}\fi

\begin{table}[!hbtp]
\begin{center}
\begin{tabular}{|c|cc|cc||cc|}
\hline
   & \multicolumn{2}{|c|}{filtered} & \multicolumn{2}{|c|}{centered} & \multicolumn{2}{|c|}{filtered + ENO} \\
\hline
  $M$ &  error   & order  & error    & order  & error    & order  \\
\hline
\hline 
   50 & 2.16E-03 &   -   &    NaN &   - & 5.29E-03 &   -    \\ 
  100 & 7.14E-04 &  1.60 &    NaN &   - & 1.35E-03 &  1.97  \\ 
  200 & 2.17E-04 &  1.72 &    NaN &   - & 3.42E-04 &  1.98  \\ 
  400 & 6.32E-05 &  1.78 &    NaN &   - & 8.61E-05 &  1.99  \\ 
  800 & 2.17E-05 &  1.54 &    NaN &   - & 2.16E-05 &  2.00  \\ 
\hline
\end{tabular}
\caption{(\Examplesix) Global errors for filtered scheme, compared with the centered (unstable) scheme,
and a filtered ENO scheme.
\label{tab:ex6}
}
\end{center}
\end{table}

\if{
\begin{table}[!h]
\begin{center}
\begin{tabular}{|c|cc|cc|cc|}
\hline
   & \multicolumn{2}{|c|}{($L_1$-Error)} & \multicolumn{2}{|c|}{$L_2$-Error)} & \multicolumn{2}{|c|}{$L_\infty$-Error)} \\
\hline
  $M$ &  error   & order  & error    & order  &  error   & order  \\
\hline
\hline 
   50 & 1.90E-03 &   -    & 2.97E-03 &   -    & 5.29E-03 &   -    \\ 
  100 & 5.02E-04 &  1.92  & 7.81E-04 &  1.93  & 1.35E-03 &  1.97  \\ 
  200 & 1.28E-04 &  1.97  & 1.99E-04 &  1.97  & 3.42E-04 &  1.98  \\ 
  400 & 3.24E-05 &  1.98  & 5.04E-05 &  1.98  & 8.61E-05 &  1.99  \\ 
  800 & 8.15E-06 &  1.99  & 1.27E-05 &  1.99  & 2.16E-05 &  2.00  \\ 
   \hline
\end{tabular}
\caption{\Examplesix Global errors of ENO2 scheme  + filtered with RK2 in time.
\label{tab:ex4.1-3}
}
\end{center}
\end{table}
\begin{table}[!h]
\begin{center}
\begin{tabular}{|c|cc|cc|cc|}
\hline
   & \multicolumn{2}{|c|}{($L_1$-Error)} & \multicolumn{2}{|c|}{$L_2$-Error)} & \multicolumn{2}{|c|}{$L_\infty$-Error)} \\
\hline
  $M$ &  error   & order  & error    & order  &  error   & order  \\
\hline
\hline 
   50 & 3.46E-03 &   -    & 1.05E-02 &   -    & 7.04E-02 &   -    \\ 
  100 & 9.04E-04 &  1.94  & 3.67E-03 &  1.52  & 3.54E-02 &  0.99  \\ 
  200 & 2.30E-04 &  1.98  & 1.29E-03 &  1.51  & 1.78E-02 &  1.00  \\ 
  400 & 5.81E-05 &  1.98  & 4.53E-04 &  1.51  & 8.89E-03 &  1.00  \\ 
  800 & 1.46E-05 &  1.99  & 1.60E-04 &  1.50  & 4.45E-03 &  1.00  \\ 
\hline
\end{tabular}
\caption{(Example 6) Global errors of ENO alone scheme with RK2 in time. (only ENO2)
\label{tab:ex6-eno}
}
\end{center}
\end{table}
}\fi


\begin{figure}[!h]
\vspace{-3cm}
\begin{center}
\includegraphics[width=0.5\textwidth]{\figs/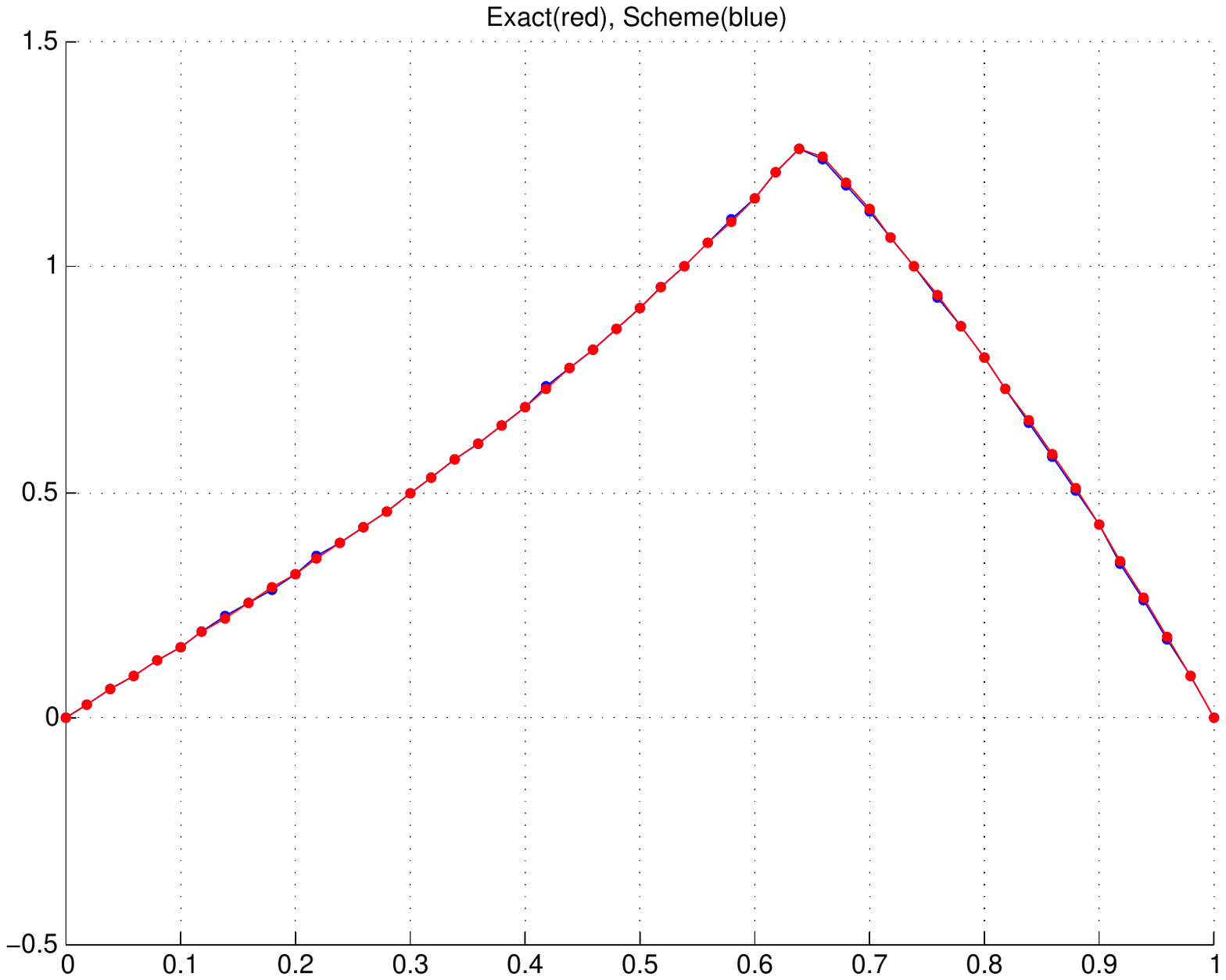}
\caption{(\Examplesix) Filtered scheme for a steady equation, with $M=50$ mesh points.}
\end{center}
\vspace{-3cm}
\end{figure}

\medskip\noindent
{\bf \Exampleseven\ Advection with an obstacle.}
Here we consider an obstacle problem,
which is taken from~\cite{Boka-cheng-shu-13}:
\be
  && min(v_t+v_x,\ v-g(x))=0,\quad  t >0, x\in[-1,1],\\
  && v_0(x)= 0.5+sin(\pi x) \quad x\in[-1, 1],
\ee
together with periodic boundary condition. The obstacle function is $g(x):= sin(\pi x)$.
In this case exact solution is given by:
\be
v(t,x) := \left\{
  \begin{array}{l l}
    \max(v_0(x-at), g(x))& ~ \text{if}~ t< \frac{1}{3} \\
    \max(v_0(x-at), g(x), -1_{x\in\ [0.5,1]} )& ~ \text{if} ~ t \in [ \frac{1}{3},\frac{5}{6}], \\
    \max(v_0(x-at), g(x), 1_{x\in\ [-1,t-\frac{5}{6}] \cup [0.5,1]} )& ~ \text{if} ~ t \in [\frac{5}{6}, 1], \\
  \end{array} \right.
\ee
Results are given in Table~\ref{tab:ex7}, for terminal time $t=0.5$.
Errors are computed away from singular points, i.e., 
in the region 
$[-1,1]  \setminus \big( \cup _{i=1,3}[s_i-\delta, s_i+\delta]\big)$
(where $s_1=-0.1349733, s_2=0.5$ and $s_3=2/3$ are the three singular points.
Filtered scheme is numerically of second order (ENO gives comparable results here).

\begin{table}[\tablepos]
\begin{center}
\begin{tabular}{|cc|cc|cc|cc|}
\hline
  \multicolumn{2}{|c|}{Errors} 
   & \multicolumn{2}{|c|}{filtered $\eps= 5\dx$} & \multicolumn{2}{|c|}{centered} & \multicolumn{2}{|c|}{ENO2} \\
\hline
  $M$ & $N$  &  error   & order  & error    & order  &  error   & order  \\
\hline
\hline 
   40 &   20  & 7.93E-03 &  2.03  & 1.63E-02 &  1.54  & 2.14E-02 &  1.59  \\ 
   80 &   40  & 1.84E-03 &  2.10  & 2.98E-02 & -0.87  & 7.75E-03 &  1.46  \\ 
  160 &   80  & 3.92E-04 &  2.24  & 1.46E-02 &  1.03  & 1.07E-03 &  2.86  \\ 
  320 &  160  & 9.67E-05 &  2.02  & 8.02E-03 &  0.86  & 2.72E-04 &  1.97  \\ 
  640 &  320  & 2.40E-05 &  2.01  & 4.10E-03 &  0.97  & 6.92E-05 &  1.98  \\ 
\hline
\end{tabular}
\caption{(\Exampleseven) $L^\infty$ errors away from singular points, 
for filtered scheme, centered scheme, and second order ENO scheme.
\label{tab:ex7}
}
\end{center}
\end{table}

\if{
\begin{table}[\tablepos]
\begin{center}
\begin{tabular}{|cc|cc|cc|cc|}
\hline
  \multicolumn{2}{|c|}{Errors} 
   & \multicolumn{2}{|c|}{($L_1$-Error)} & \multicolumn{2}{|c|}{$L_2$-Error)} & \multicolumn{2}{|c|}{$L_\infty$-Error)} \\
\hline
  $M$ & $N$  &  error   & order  & error    & order  &  error   & order  \\
\hline
\hline 
   40  &   25  & 3.56E-03 &     -      & 3.96E-03 &   -        & 5.43E-03 &   -    \\ 
   80  &   49  & 8.70E-04 &  2.03  & 9.77E-04 &  2.02  & 1.53E-03 &  1.83  \\ 
  160 &   98  & 2.15E-04 &  2.02  & 2.40E-04 &  2.03  & 3.47E-04 &  2.14  \\ 
  320 &  196 & 5.35E-05 &  2.01  & 5.96E-05 &  2.01  & 9.53E-05 &  1.86  \\ 
  640 &  391 & 1.33E-05 &  2.01  & 1.49E-05 &  2.00  & 2.56E-05 &  1.90  \\ 
 \hline
\end{tabular}
\caption{ ({\Exampleseven}.2)
Filtered scheme  ($5\dx$) with RK2 in time.
\label{tab:ex72}
}
\end{center}
\end{table}
}\fi

\begin{figure}[!h]
\vspace{-1.0cm}
\includegraphics[width=0.3\textwidth]{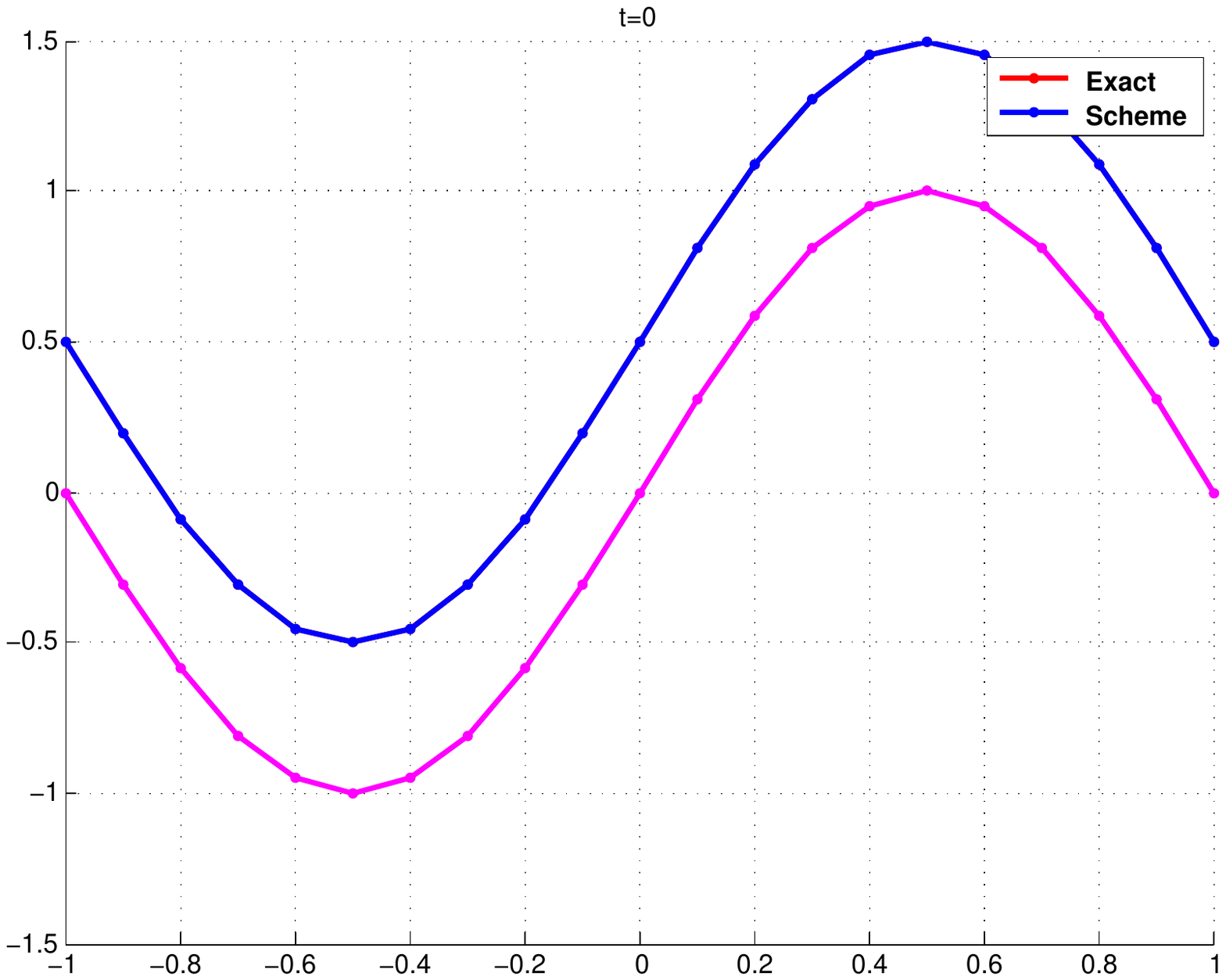}
\includegraphics[width=0.3\textwidth]{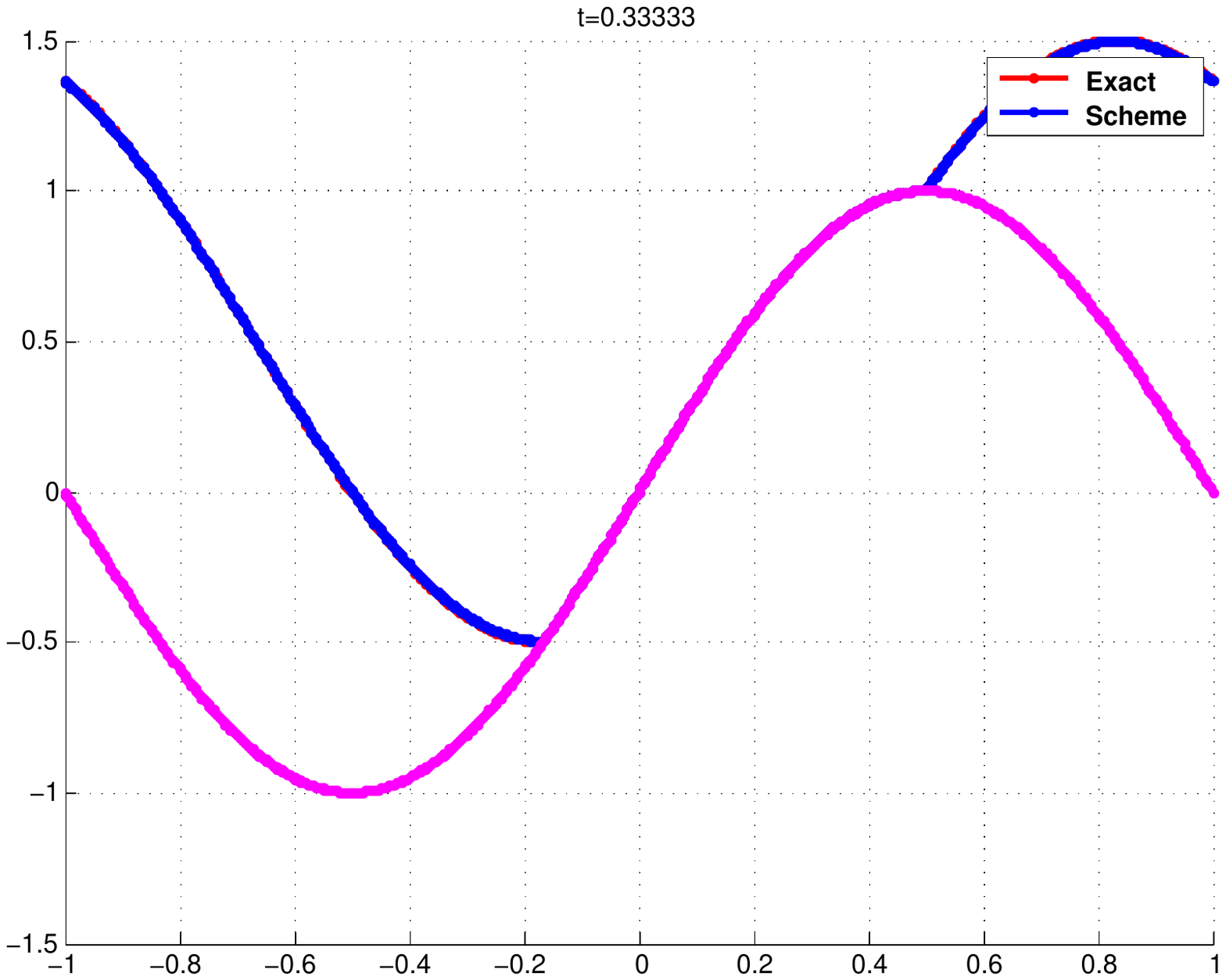}
\includegraphics[width=0.3\textwidth]{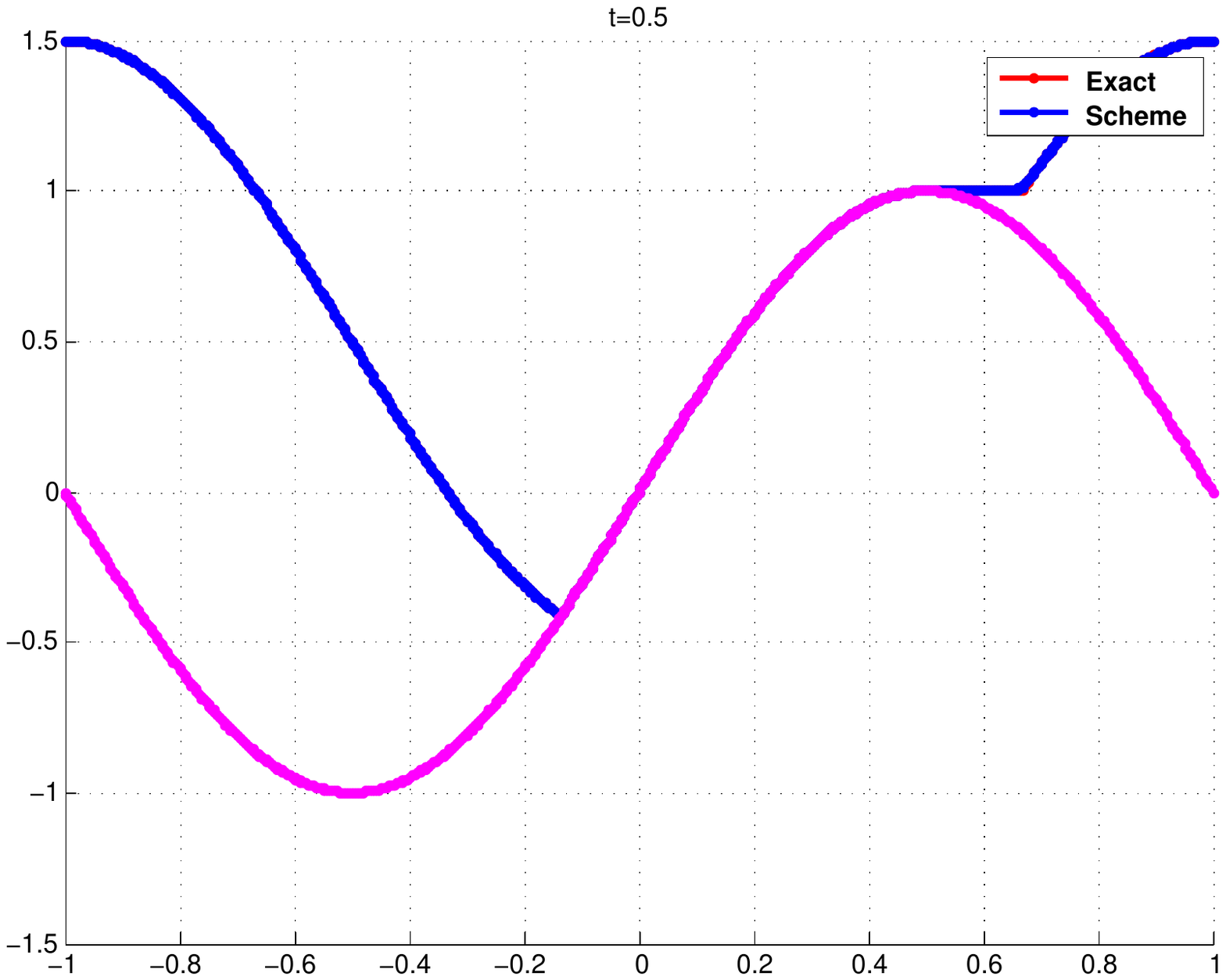}
\vspace{-1.0cm}
\caption{(\Exampleseven) Plots at T=0(initial data), T=0.3, T=0.5.}
\end{figure}

\medskip\noindent
{\bf \Exampleheight\ Eikonal with an obstacle.}
We consider an Eikonal equation with an obstacle term, also taken from~\cite{Boka-cheng-shu-13}:
\be
&& min(v_t+|v_x|, v-g(x))=0,\quad  t >0, x\in[-1,1],\\
&& v_0(x)= 0.5+sin(\pi x)\quad x\in[-1, 1],
\ee
with periodic boundary condition on $(-1,1)$ and $g(x)= sin(\pi x)$.
In this case the exact solution is given by:
\be
   v(t,x)=\max(\bar v(t,x), g(x)).
\ee
where $\bar v$ is the solution of the Eikonal equation $v_t+ |v_x|=0$.
The formula $\bar v(t,x)= \min_{y\in[x-t,x+t]}v_0(y)$ holds, which simplifies to
\be
  v(t,x) := \left\{
  \begin{array}{l l}
    v_0(x+t)& ~\text{if} ~ x< -0.5-t \\
     -0.5   & ~\text{if} ~ x \in [ -0.5-t, -0.5+t], \\
    \min(v_0(x-t), v_0(x+t))& ~ \text{if} ~ x \geq -0.5+t, \\
  \end{array} \right.
\ee

Results are given in Table~\ref{tab:ex8} for terminal time $T=0.2$.
Plots are also shown in Figure~\ref{fig:ex8} for different times
(for $t\geq \frac{1}{3}$ solution remains unchanged).


\if{
\begin{table}[\tablepos]
\begin{center}
\begin{tabular}{|cc|cc|cc|cc|}
\hline
  Errors
   & \multicolumn{2}{|c|}{filtered} & \multicolumn{2}{|c|}{ENO2} \\
\hline
  $M$ &   error   & order  & error    & order \\
\hline
\hline 
   40 &   2.35E-03 &   -    & 3.74E-03 &   -    & 9.59E-03 &   -    \\ 
   80 &   3.88E-04 &  2.60  & 6.26E-04 &  2.58  & 2.79E-03 &  1.78  \\ 
  160 &   7.57E-05 &  2.36  & 1.13E-04 &  2.47  & 4.12E-04 &  2.76  \\ 
  320 &   1.59E-05 &  2.25  & 2.26E-05 &  2.32  & 1.04E-04 &  1.99  \\ 
  640 &   3.73E-06 &  2.09  & 5.50E-06 &  2.04  & 3.22E-05 &  1.69  \\ 
  \hline
\end{tabular}
\caption{\label{tab:ex8}
Filtered scheme at time $t=0.2$}
\end{center}
\end{table}

\begin{table}[\tablepos]
\begin{center}
\begin{tabular}{|cc|cc|cc|cc|}
\hline
  \multicolumn{2}{|c|}{Errors} 
   & \multicolumn{2}{|c|}{($L_1$-Error)} & \multicolumn{2}{|c|}{$L_2$-Error)} & \multicolumn{2}{|c|}{$L_\infty$-Error)} \\
\hline
  $M$ & $N$  &  error   & order  & error    & order  &  error   & order  \\
\hline
\hline 
   40 &    8  & 5.87E-03 &   -    & 6.85E-03 &   -    & 1.31E-02 &   -    \\ 
   80 &   16  & 1.52E-03 &  1.95  & 2.12E-03 &  1.69  & 5.53E-03 &  1.24  \\ 
  160 &   32  & 3.98E-04 &  1.93  & 6.80E-04 &  1.64  & 2.25E-03 &  1.30  \\ 
  320 &   64  & 1.04E-04 &  1.94  & 2.18E-04 &  1.64  & 9.20E-04 &  1.29  \\ 
  640 &  128  & 2.67E-05 &  1.96  & 6.96E-05 &  1.65  & 3.77E-04 &  1.29  \\ 
     \hline
\end{tabular}
\caption{ ENO scheme (second order ) with RK2 in time and terminal time T=0.2.}
\end{center}
\end{table}
}\fi

\begin{table}[\tablepos]
\begin{center}
\begin{tabular}{|c|cc|cc|}
\hline
  Errors
   & \multicolumn{2}{|c|}{filtered} & \multicolumn{2}{|c|}{ENO2} \\
\hline
  $M$ &   error   & order  & error    & order \\
\hline
\hline 
   40 &   3.74E-03 &   -    & 6.85E-03 &   -    \\
   80 &   6.26E-04 &  2.58  & 2.12E-03 &  1.69  \\
  160 &   1.13E-04 &  2.47  & 6.80E-04 &  1.64  \\
  320 &   2.26E-05 &  2.32  & 2.18E-04 &  1.64  \\
  640 &   5.50E-06 &  2.04  & 6.96E-05 &  1.65  \\
\hline
\end{tabular}
\caption{\label{tab:ex8}
Filtered scheme and ENO scheme at time $t=0.2$}
\end{center}
\end{table}

\begin{figure}[!h]
\vspace{-1cm}
\includegraphics[width=0.3\textwidth]{\figsnew/obstacle}
\includegraphics[width=0.3\textwidth]{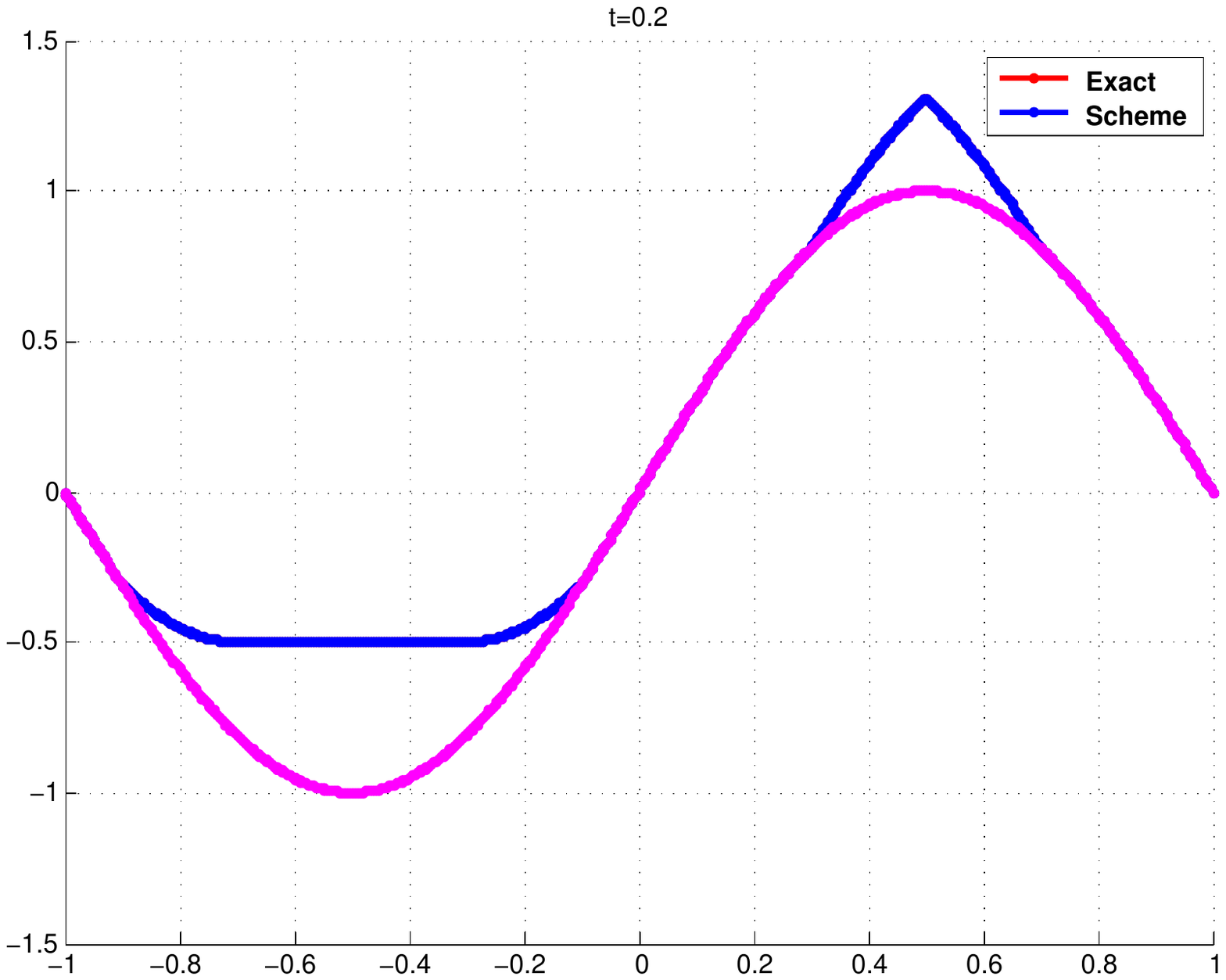}
\includegraphics[width=0.3\textwidth]{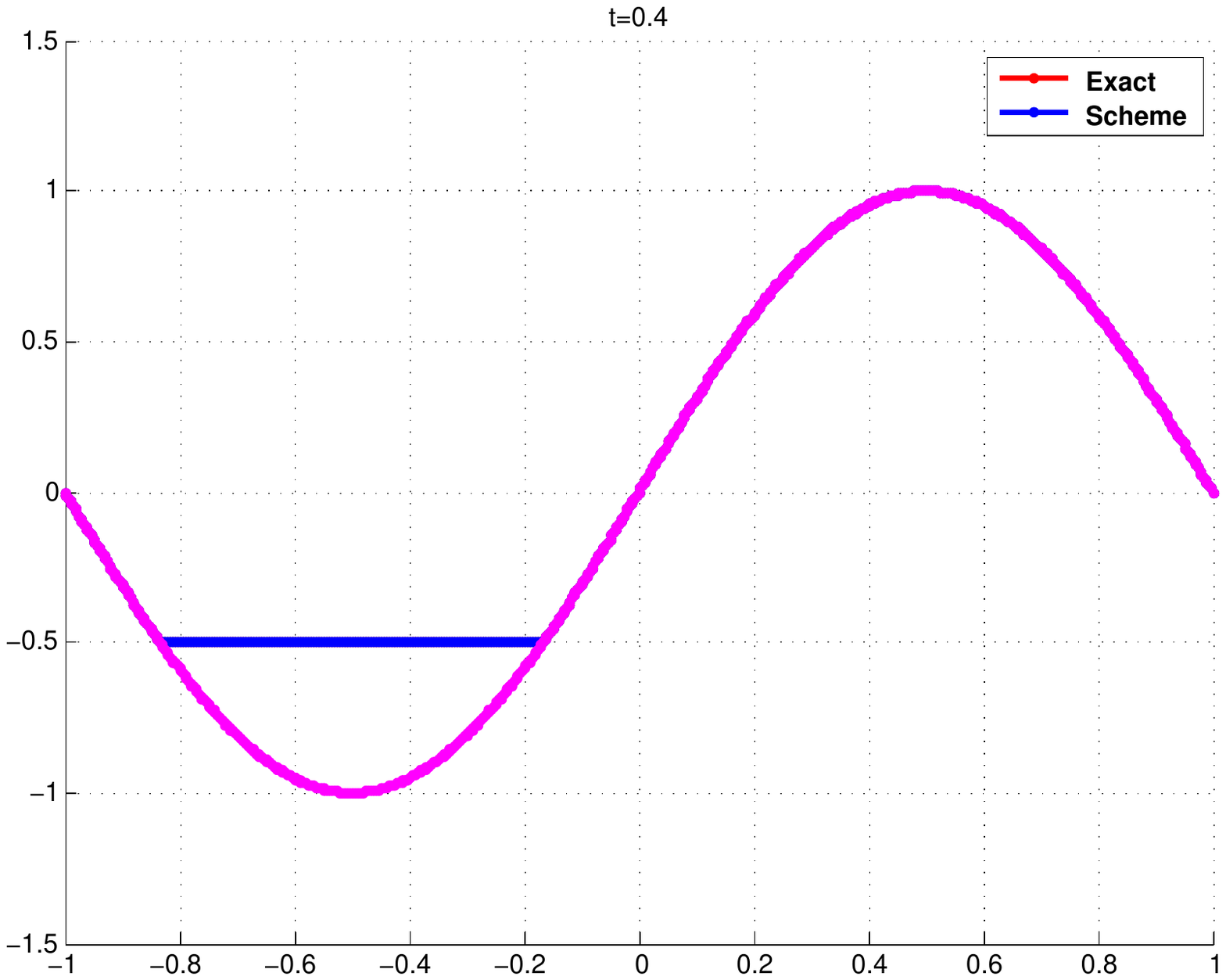}
\vspace{-1cm}
\caption{\label{fig:ex8} (\Exampleheight) Plots at times $t=0$, $t=0.2$ and $t=0.4$.
The dark line is the numerical solution, similar to the exact solution, and the ligth line is the obstacle function.}
\end{figure}

%

\section{Conclusion}
We propose a ``filtered" scheme which behaves as a high--order scheme when the solution is smooth and as a low order
monotone scheme otherwise.
It has a simple presentation that is easy to implement. 
Rigorous error bounds hold, of the same order as the Crandall-Lions estimates in $\sqrt{\dx}$ where $\dx$ is the mesh size.
In the case the solution is smooth a high-order consistency error estimate also holds.
We show on several numerical examples the ability of the scheme to stabilize an otherwise
unstable scheme, and also we observe a precision similar to a second order ENO scheme
on basic linear and non linear examples. 

On going works concern the application of the present approach to some front propagation equations.

\appendix

\section{An essentially non-oscillatory (ENO) scheme of second order} \label{app:A} 

We recall here a simple ENO method of order two
based on the work of Osher and Shu~\cite{OS91} for Hamilton Jacobi equation
(the ENO method was designed by Harten et al.~\cite{HEOC87}
for the approximation solution of non-linear conservation law).

Let $m$ be the minmod function defined by
\be\label{linEquGrad}
m(a,b)=\left\{
  \begin{array}{ll}
    \displaystyle   a  \quad {\rm if}\, |a|\leq |b|,~ ab >0\\
    \displaystyle   b  \quad {\rm if}\, |b| <|b|,~ ab>0\\
    \displaystyle   0  \quad {\rm if}\, ab \leq 0
  \end{array}
\right.
\ee 
(other functions can be considered such as $m(a,b)=a$ if $|a|\leq |b|$ and $m(a,b)=b$ otherwise).
Let $D^\pm u_j=\pm (u_{j\pm 1}-u_j)/\dx$ and 
$$
  D^2 u_j : = \frac{u_{j+1}-2 u_j + u_{j-1}}{\dx^2}.
$$
Then the right and left ENO approximation of the derivative can be defined by
\beno
  \bar D^{\pm}u_{j}= D^{\pm}u_j \mp \frac{1}{2} \dx\  m(D^2 u_{j},D^2 u_{j\pm 1} )
\eeno
and the ENO (Euler forward) scheme by 
$$
  S_0(u)_j:= u_j - \tau h^M(x_j,\,\bar D^-u_j,\, \bar D^+u_j).
$$
The corresponding RK2 scheme can then be defined by $S(u)=\frac{1}{2}(u + S_0(S_0(u)))$.

\if{
{\bf{TVD RK3 scheme }} Here we are recalling  third order TVD RK  scheme 
\beno
  & & u^{n,1}_i:= u^n_i - \dt h(x_i,D^\mp u^n_i).\\
  & &u^{n,2}_{i}:= \frac{3}{4}(u^n_i +\frac{1}{4}u^{n,1}-\dt h(x_i,D^\mp u^{n,1}_i).\\
  && u^{n+1}_i= \frac{1}{3}u_i^n+\frac{2}{3}u^{n,2}-\frac{2}{3}\dt h(x_i,D^\mp u^{n,2}_i),\\
\eeno
}\fi


\bibliography{Bibliography}{}
\bibliographystyle{plain}

\end{document}